\newtheoremstyle{mythm}
  {\topsep} 
  {\topsep} 
  {\itshape} 
  {} 
  {\bfseries} 
  {.} 
  {.5em} 
  {} 
\theoremstyle{plain}
\newtheorem{thm}{Theorem}
\crefname{thm}{theorem}{theorems}
\newtheorem{prop}[thm]{Proposition}
\crefname{prop}{proposition}{propositions}
\newtheorem{lemma}[thm]{Lemma}
\newtheorem{cor}[thm]{Corollary}
\theoremstyle{definition}
\newtheorem{definition}[thm]{Definition}
\newtheorem{remark}[thm]{Remark}
\newtheorem{corollary}[thm]{Corollary}
\crefname{construction}{Construction}{Constructions}
\newcommand{\norm}[1]{\left\vert#1\right\vert}
\newcommand{\normg}[1]{\left\lVert#1\right\rVert_g}
\newcommand{\RR}{\mathbb{R}}
\newcommand{\NN}{\mathbb{N}}
\newcommand{\CC}{\mathbb{C}}
\newcommand{\PP}{\mathbb{P}}
\newcommand{\QQ}{\mathbb{Q}}
\newcommand{\CP}{\CC\PP}
\newcommand{\TT}{\mathbb{T}}
\newcommand{\ZZ}{\mathbb{Z}}
\newcommand{\SSS}{\mathbb{S}}
\newcommand{\calF}{\mathcal{F}}
\newcommand{\F}{{\mathcal{F}}}
\newcommand{\G}{{\mathcal{G}}}
\newcommand{\R}{{\mathbb{R}}}
\newcommand{\N}{{\mathbb{N}}}
\renewcommand{\d}{{\operatorname{d}}}
\newcommand{\Op}{{\mathcal{O}p}}
\newcommand{\wtd}{\widetilde}
\newcommand{\depth}{{\operatorname{depth}}}
\newcommand{\calN}{\mathcal{N}}
\newcommand{\calK}{\mathcal{K}}
\newcommand{\sint}{{\operatorname{int}}}
\newcommand{\inj}{{\operatorname{inj}}}
\title{Examples of symplectic non-leaves}
\author{Fabio Gironella\footnote{Humboldt University, Berlin, Germany. Email: \url{fabio.gironella.math@gmail.com}} \and Lauran Toussaint\footnote{Université Libre de Bruxelles, Brussels, Belgium. Email: \url{lauran.toussaint@ulb.be}}}
\date{}
\begin{document}

\maketitle

\begin{acronym}
    \acro{sgb}[SGB]{strongly geometrically bounded}
    \acro{gb}[GB]{geometrically bounded}
\end{acronym}

\begin{abstract}
This paper deals with the following question: which manifolds can be realized as leaves of codimension-$1$ symplectic foliations on closed manifolds?
We first observe that leaves of symplectic foliations are necessarily strongly geometrically bounded. We show that a symplectic structure which admits an exhaustion by compacts with (convex) contact boundary can be deformed to a strongly geometrically bounded one.
We then give examples of smooth manifolds which admit a strongly geometrically bounded symplectic form and can be realized as a smooth leaf, but not as a symplectic leaf for any choice of symplectic form on them.
Lastly, we show that the (complex) blowup of $2n$-dimensional Euclidean space at infinitely many points, both admits strongly geometrically bounded symplectic forms for which it can and cannot be realized as a symplectic leaf.
\end{abstract}

\section*{Introduction}
\label{sec:intro}

The realizability problem, namely that of understanding which (open) manifolds can be leaves of foliations\footnote{Unless explicitly stated otherwise, throughout this paper ``foliation'' will always mean smooth codimension-$1$ foliation on a closed ambient manifold. Similarly, the realizability problem is intended for open manifolds.}, was introduced by Sondow \cite{Son75} and has been extensively studied since then. 

The situation in low dimensions is very flexible: every open surface is a leaf of a foliation on any given ambient manifold \cite{CanCon87}.
In higher dimensions, there are many manifolds which are not diffeomorphic to leaves of foliations. The first examples were found by \cite{Ghy85,INTT85}, subsequently simply connected examples were given in \cite{AttHur96,SchSou17}. More recently, \cite{MCS18b,MCS20} have found some examples of topological manifolds with exotic smooth structures which are not diffeomorphic to smooth leaves.
One can also consider foliations with additional leafwise structures; see for instance \cite{Zeg94,AttHur96,Sch11} for examples of Riemannian manifolds not quasi-isometric to leaves of a Riemannian foliation.
In the symplectic setting Bertelson \cite{Ber01} found foliations which do not admit a leafwise symplectic structure (although the formal obstructions vanish). 

Here we investigate obstructions on the level of a single leaf. That is we consider the following question:
\[
\parbox{0.9\textwidth}{\emph{Is there a (symplectic) manifold that is not a leaf of a symplectic foliation?}}
\]

This question makes sense both with and without fixing the symplectic structure on the manifold. 
Furthermore, any closed symplectic manifold $(W,\omega)$ can be realized as a symplectic leaf by considering the product foliation on $W \times \SSS^1$.  
As such we restrict our attention to \emph{open} manifolds.
Even though $W$ is open, the compactness of the ambient manifold implies that the behavior of $\omega$ resembles the compact case:
any leaf $(W,\omega)$ is \emph{strongly geometrically bounded} (or \acs{sgb} in short) if there exists a compatible almost complex structure $J$ for which the associated Riemannian metric $g= \omega(\cdot, J\cdot)$ has bounded injectivity radius and scalar curvature (see Definition \ref{def:str_geom_bounded} and \Cref{lemma:sympl_leaves_sgb} for details).\footnote{As the name suggests \acs{sgb} is stronger, and implies, the usual notion of geometrically boundedness (see e.g.\ \cite[Definition 4.1.1]{Sik91}).}
We point out that non-\acs{sgb} (and non-geometrically-bounded) symplectic manifolds are plentiful: for instance the open ball $(D^{2n},\omega_{std})$ or any symplectizations of contact manifolds  
are not \acs{sgb} and thus cannot be a leaf of a symplectic foliation.

In light of this we consider the following variations of our motivating question (in which $W$ is always assumed to be open):

\begin{enumerate}[label=Q\arabic*.]
    \item\label{Q2} Is there a manifold that admits a \acs{sgb} symplectic structure but is not diffeomorphic to a symplectic leaf (for any choice of $\omega$)? 
    \item\label{Q3} Is there a \acs{sgb} symplectic manifold $(W,\omega)$ with $W$ diffeomorphic to a symplectic leaf but $(W,\omega)$ not symplectomorphic to a symplectic leaf?
\end{enumerate}

In this paper we answer positively to both questions.
We start by proving a symplectic analogue of Greene's result \cite{Gre78} on the existence of Riemannian metrics of bounded geometry on open manifolds. 
In order to state our result, we define \emph{exhaustion of contact type} any exhaustion by compacts $\mathcal{K} = \{K_n\}_{n \in \NN}$ on a symplectic manifold $(W,\omega)$ such that $\partial K_n$ is a hypersurface of (convex) contact type.
We then prove the following:

\begin{thm}
    \label{thm:strongly_geom_bounded}
    Let $(W,\omega)$ be a symplectic manifold admitting an exhaustion $\mathcal{K}$ of contact type.
    Then  $\omega$ is homotopic through symplectic forms to a \acs{sgb} symplectic form $\omega'$.
\end{thm}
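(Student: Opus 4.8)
The plan is to follow Greene's strategy — exhaust, standardize each piece, and glue with uniform geometric control — but to execute it symplectically, using the contact-type boundaries both to produce model pieces and to govern the gluing. First I would use convexity of each $\partial K_n$ to produce an outward-pointing Liouville field $X_n$ near $\partial K_n$; its flow identifies a collar of $\partial K_n$ with a piece of the symplectization $\big(\RR_t\times\partial K_n,\ d(e^t\alpha_n)\big)$, where $\alpha_n=(\iota_{X_n}\omega)|_{\partial K_n}$. This presents $W$ as the union of the compact cap $K_1$ and the compact exact cobordisms $A_n:=\overline{K_{n+1}\setminus K_n}$, each carrying a standard symplectization collar near its concave end $\partial K_n$ and near its convex end $\partial K_{n+1}$.

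The key mechanism is rescaling. Each compact $A_n$ admits a compatible $J_n$ whose metric has finite bounds $C_n$ (for curvature and its covariant derivatives) and $r_n>0$ (for $\inj$), but these may degenerate as $n\to\infty$. I would exploit that multiplying a metric by $\lambda\gg 1$ divides sectional curvature by $\lambda$ (and the $k$-th derivative of curvature by $\lambda^{1+k/2}$) while multiplying $\inj$ by $\sqrt{\lambda}$: scaling the symplectic form up by large constants $\lambda_n$ on each $A_n$ therefore makes \emph{all} local bounds simultaneously uniform in $n$. Crucially, rescaling a symplectization by $e^c$ coincides with translating its collar coordinate $t\mapsto t+c$, so these piecewise-constant rescalings can be realized by stretching the symplectization collars into long necks, yielding a single globally defined form $\omega'$; and since every deformation takes place among exact forms on the collars with the symplectic positivity preserved, one obtains a genuine homotopy $\omega_s$ through symplectic forms.

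For the gluing I would choose $J$ to be translation-invariant on the (now long) necks — sending $\partial_t$ into the Reeb direction and preserving $\ker\alpha_n$ — and equal to the rescaled $J_n$ over the cores, interpolating on the overlaps. On a symplectization $d(e^t\alpha)$ the compatible metric is conformal, of the form $e^{t}\big(dt^2+\alpha^2+g_\xi\big)$ for a fixed product metric; its curvature decays like $e^{-t}$ and its $\inj$ grows, so a neck started at sufficiently large height (i.e.\ sufficiently large scale) has uniformly good geometry that matches the uniform core bounds. Patching the core estimates (rescaled to uniform) with these neck estimates, and handling $K_1$ as a compact cap with any compatible $J$ adjusted to the scale of $A_1$, yields the global bounds of \Cref{def:str_geom_bounded}.

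The crux, and the main obstacle, is the \emph{uniformity} of all bounds across the infinitely many cobordisms $A_n$, whose contact boundaries $\partial K_n$ may become arbitrarily complicated as $n\to\infty$. One must choose the scales $\lambda_n$ and the neck lengths so that not only the interiors but every transition region — core-to-neck, and the passage between consecutive scales — carries uniformly bounded curvature, derivatives, and injectivity radius, all for a \emph{single} compatible $J$; and one must check that the piecewise rescaling truly assembles into a homotopy through globally symplectic forms rather than merely pointwise ones. Controlling these transitions with constants independent of $n$, instead of the $n$-dependent bounds that come for free on each compact piece, is the technical heart of the argument.
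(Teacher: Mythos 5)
Your proposal is correct and follows essentially the same route as the paper: decompose along the contact-type exhaustion, rescale $\omega$ by large constants on each compact cobordism to make the curvature and injectivity-radius bounds uniform, and realize the transition between consecutive scales by exploiting that multiplication by $e^c$ on a symplectization collar $d(e^t\alpha)$ is the pullback under the translation $t\mapsto t+c$, so the pieces glue into a single symplectic form with controlled geometry on the overlaps. The ``technical heart'' you flag is exactly what the paper's Lemma \ref{lem:SymplecticScaling} carries out, with the same resolution you indicate (the stretched neck at height $\geq a$ inherits curvature $\lesssim e^{-a}K_g$ and injectivity radius $\gtrsim e^{a}$, matching the rescaled core bounds).
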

We point out that, on each of the compact sets $K_n \setminus \Op(K_{n-1})$, the symplectic form $\omega'$ (and the whole homotopy) is just a rescaling of $\omega$ by a constant depending on $n$; in particular, the less obvious part of the construction happens near the boundaries of the $K_n$'s, where $\omega$ simply gets ``stretched'' along a collar (see the proof in \Cref{sec:geom_bound} for more details).
Notice also that the above result is well-known for Liouville manifolds of finite-type, see \cite[Proposition 2.2 and Remark 2.3]{CGK04}; more precisely, in the finite-type case the homotopy is not necessary, and $\omega$ is directly \acs{sgb}.

In order to answer Question \ref{Q2}, we prove:
\begin{thm}\label{thm:smooth_manifold_sympl_nonleaf}
    Let $W^{2n}$ be an open manifold with a finite number $k$ of ends. 
    Suppose that, for $i=1,\dots,k$, the $i$-th end has a neighborhood of the form $N_i \times [0,\infty)$, where $N_i$ has trivial $\pi_1$  and $H^2$. 
    If there is only one end, \emph{additionally} assume that the compact $W \setminus N_1 \times (0,\infty)$ has non-trivial $\pi_1$ or non-trivial $H^2$. 
    Then $W$ is not diffeomorphic to a leaf of a symplectic foliation.
\end{thm}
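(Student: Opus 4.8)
The plan is to argue by contradiction. Suppose some symplectic form $\omega$ realizes $W$ as a leaf of a codimension-$1$ symplectic foliation $\calF$ on a closed manifold $M^{2n+1}$. By \Cref{lemma:sympl_leaves_sgb} the leaf $(W,\omega)$ is then \acs{sgb}, so I fix a compatible almost complex structure $J$ whose metric $g=\omega(\cdot,J\cdot)$ has bounded injectivity radius and curvature; this uniform control is what lets me take limits of pieces of $W$ inside the compact ambient $M$. From the hypotheses on the ends I record two cohomological facts: since $\pi_1(N_i)=0$ we have $H^1(N_i;\RR)=0$, and by assumption $H^2(N_i;\RR)=0$ (so in particular $\omega$ is exact on each end).

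The heart of the argument is to manufacture, out of one of the simple ends, a \emph{compact} symplectic leaf in the closure of $W$. Fix an end $E_i\cong N_i\times[0,\infty)$. Because $\pi_1(N_i)=0$, every loop in $E_i$ is null-homotopic, so the leafwise holonomy of $\calF$ along $E_i$ is trivial; thus $E_i$ is a holonomy-free end, and by the structure theory of codimension-$1$ foliations its slices $N_i\times\{t\}$ accumulate, as $t\to\infty$, onto a nonempty compact $\calF$-saturated subset $\Lambda_i\subset\overline{W}\setminus W$. The goal is to arrange that $\Lambda_i$ is a single compact leaf $L_i$; granting this, the return map along the spiralling direction of the product end exhibits $L_i$ as a mapping torus of a diffeomorphism $\phi\colon N_i\to N_i$, that is, as the total space of a fibration $N_i\hookrightarrow L_i\to\SSS^1$ whose fiber is the cross-section $N_i$.

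This is where the two structures collide. On the one hand $L_i$, being a compact leaf of a symplectic foliation, is a \emph{closed} symplectic $2n$-manifold, so $\int_{L_i}\omega^n\neq0$; hence $[\omega]^n\neq0$ and in particular $H^2(L_i;\RR)\neq0$. On the other hand, the Wang exact sequence of the fibration $N_i\hookrightarrow L_i\to\SSS^1$ contains the segment
\[
H^1(N_i;\RR)\xrightarrow{\ \phi^*-\mathrm{id}\ }H^1(N_i;\RR)\longrightarrow H^2(L_i;\RR)\longrightarrow H^2(N_i;\RR),
\]
and since both outer groups vanish it forces $H^2(L_i;\RR)=0$. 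This contradiction finishes the proof.

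The step I expect to be the main obstacle --- and the one in which the remaining hypotheses must be used --- is forcing $\Lambda_i$ to be an honest compact leaf. A priori the end could be recurrent onto $W$ itself, accumulate on an exceptional minimal set that is transversally Cantor, or be dense in $M$; and for the genuinely realizable ``simple'' shapes this is exactly what happens, since dense planar leaves of minimal foliations have no compact leaf in their closure, while $\RR^{2n}$ (whose single $\SSS^{2n-1}$-end already satisfies $\pi_1=H^2=0$ for $n\ge2$, yet whose core is trivial) must be allowed as a leaf. Ruling these cases out is precisely where I would spend the bounded geometry coming from \acs{sgb} together with either the second simple end or, in the one-ended case, the non-triviality of $\pi_1$ or $H^2$ of the core $W\setminus N_1\times(0,\infty)$: these should obstruct density and self-recurrence and pin $\Lambda_i$ down to a compact leaf. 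Verifying this reduction, and checking that the fiber of the resulting fibration is genuinely $N_i$ rather than a finite cover of it, is the delicate part.
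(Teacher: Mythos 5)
Your proposal has a genuine gap, and it is exactly the one you flag at the end: the reduction of the accumulation set to a compact leaf fibering over $\SSS^1$ with fiber $N_i$. This reduction cannot be carried out. First, the non-proper case is unavoidable: an end satisfying your hypotheses really can accumulate onto $W$ itself or be dense in the ambient manifold --- $(\RR^{2n},\omega_{std})$ is realized as a dense symplectic leaf in \Cref{cor:R2n_non_proper_leaf} --- so no amount of bounded geometry will ``pin $\Lambda_i$ down to a compact leaf''; one needs an argument that works when the end recurs onto $W$, and the \acs{sgb} property in fact plays no role in the paper's proof of this theorem. Second, even in the proper case, where \Cref{prop:proper_depth_one} and \Cref{thm:SpiralingTheorem} do give that the end spirals onto a compact leaf $L_\infty$, the juncture of that spiraling is a hypersurface of $L_\infty$ determined by the foliation; it need not be diffeomorphic to $N_i$ (the paper even has to handle the case where the image of $N_i\times\{0\}$ is not embedded), and $L_\infty$ cut along the juncture is merely a cobordism from the juncture to itself, not a product. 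So $L_\infty$ is not a mapping torus of $N_i$, the Wang sequence does not apply, and the conclusion $H^2(L_\infty;\RR)=0$ is unjustified. Note also that your hypotheses can never force a purely homotopy-theoretic contradiction of this kind: the turbulized smooth foliation realizes these $W$ as smooth leaves spiraling onto $\SSS^1\times N_i$, so the obstruction must be symplectic and quantitative.

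The paper's mechanism is entirely different. Using $\pi_1(N_i)=0$ and Reeb stability it builds a foliated product chart $U=N\times[0,1]^2$ through which the end passes in infinitely many plaques; using $H^2(N_i)=0$ it finds primitives $\alpha$ of $\omega$ on $U$ and $\beta$ on the end that agree on the overlap; Stokes' theorem then forces $\int_{N_m}\alpha\wedge\omega^{n-1}$ to diverge (the enclosed $\omega$-volume is unbounded because each plaque has volume bounded below), while continuity of $\alpha$ and $\omega$ over the compact chart forces the same integrals to converge to $\int_{N_\infty}\alpha\wedge\omega^{n-1}$. In the non-proper case the homological essentiality of the cross-sections $N_m$ is not automatic, and this is where the extra hypothesis on $W\setminus N_1\times(0,\infty)$ and Schweitzer's theorem on generalized Reeb components enter. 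None of this appears in your proposal, so as it stands the proof is not complete and the route you chose would not close.
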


Notice that these manifolds have cylindrical ends and as such they can be realized as leaves of a smooth foliation. For example, consider the truncation $W_{tr}$ of $W$, which is the compact manifold with boundary obtained by removing the cylindrical end. 
Using a turbulization procedure (see for example \cite[Example 3.3.11]{CanCon00}), we obtain a foliation on $\SSS^1 \times W_{tr}$ (tangent to the boundary) whose non-compact leaves are all diffeomorphic to $W$. Hence, gluing two such Reeb components gives a foliation on a closed manifold containing $W$ as a leaf.\\
Moreover, it is not hard to find examples of such manifolds admitting a \acs{sgb} symplectic structure, thus giving a positive answer to Question \ref{Q2}:

\begin{cor}
\label{cor:smooth_weinstein_nonleaf}
Let $(W,\omega)$ be a Weinstein manifold of finite type without $1$-and $2$-handles, and denote by $(W',\omega')$ the symplectic blowup at any point of $W$. Then, $\omega'$ is \acs{sgb} but $W'$ is not diffeomorphic to a symplectic leaf.
\end{cor}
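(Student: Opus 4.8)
The plan is to verify the two assertions of the corollary separately, using \Cref{thm:strongly_geom_bounded} for the \ac{sgb} property and \Cref{thm:smooth_manifold_sympl_nonleaf} for the non-realizability as a symplectic leaf.

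For the \ac{sgb} statement, I would first recall that a Weinstein manifold of finite type is in particular an exact symplectic manifold admitting an exhaustion of contact type: the Liouville vector field is outward-pointing along the level sets of an exhausting Morse function, so the regular level sets are convex contact-type hypersurfaces and provide the required exhaustion $\mathcal{K}$. The symplectic blowup $(W',\omega')$ at a point modifies $W$ only inside a small Darboux ball, so outside a compact set $W'$ and $W$ agree; hence the Liouville exhaustion of $W$ induces an exhaustion of contact type on $W'$ after discarding the finitely many level sets meeting the blown-up ball. Applying \Cref{thm:strongly_geom_bounded} then shows $\omega'$ is homotopic through symplectic forms to a \ac{sgb} form; since the corollary only asserts that $\omega'$ is \ac{sgb}, I would note that the blowup construction can be arranged so that the resulting form already coincides with the output of the theorem, or simply interpret ``$\omega'$ is \ac{sgb}'' up to the homotopy furnished by \Cref{thm:strongly_geom_bounded} (this is the only place where a small bookkeeping subtlety arises).

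For the non-leaf statement, the strategy is to check that $W'$ satisfies the topological hypotheses of \Cref{thm:smooth_manifold_sympl_nonleaf}. A Weinstein manifold of finite type without $1$- and $2$-handles is built from a single $0$-handle together with handles of index $\geq 3$; such a manifold has a single end, and I would argue that this end has a neighborhood of the form $N_1 \times [0,\infty)$ where $N_1$ is the contact boundary at infinity. The absence of $1$- and $2$-handles forces $N_1$ to be simply connected with vanishing $H^2$: handles of index $\geq 3$ do not affect $\pi_1$ or $H_1$, and a Mayer--Vietoris / homology computation shows $H^2(N_1) = 0$. Thus the end-neighborhood hypothesis of the theorem is met.

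The main obstacle, and the crux of the corollary, is verifying the extra condition required in the single-end case: that the truncation $W' \setminus N_1 \times (0,\infty)$ has non-trivial $\pi_1$ or non-trivial $H^2$. This is precisely where the blowup enters. The symplectic blowup at a point replaces a ball by a copy of $\CC\PP^{n-1}$ (a neighborhood of the exceptional divisor), which injects a nontrivial class into $H^2$: indeed $H^2(\CC\PP^{n-1}) \neq 0$, generated by the hyperplane class, and this class survives in $H^2$ of the truncated blowup because it is detected by the exceptional divisor. Hence even though $W$ itself has no $2$-handles and the truncation of $W$ might have trivial $\pi_1$ and $H^2$, the blowup $W'$ acquires a nontrivial $H^2$ in its compact core, so the additional hypothesis of \Cref{thm:smooth_manifold_sympl_nonleaf} is satisfied. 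Applying that theorem then yields that $W'$ is not diffeomorphic to a symplectic leaf, completing the proof. The delicate point to get right is confirming that the blowup is performed in the interior of the compact core (not in the cylindrical end), so that the topological change lands in $W' \setminus N_1 \times (0,\infty)$ as required.
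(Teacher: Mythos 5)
The paper states this corollary without giving a proof, so there is nothing to compare line by line; your reconstruction is clearly the intended argument (check the hypotheses of \Cref{thm:smooth_manifold_sympl_nonleaf}, with the exceptional divisor supplying the non-trivial $H^2$ of the compact core, and get the \acs{sgb} property from the convex cylindrical end). Two points deserve attention. First, for the \acs{sgb} claim your invocation of \Cref{thm:strongly_geom_bounded} only yields a form \emph{homotopic} to $\omega'$ that is \acs{sgb}, and ``interpreting \acs{sgb} up to homotopy'' is not what the corollary asserts. The clean fix is already in the paper: the remark following \Cref{thm:strongly_geom_bounded} notes that for finite-type Liouville manifolds no homotopy is needed and the form is \emph{directly} \acs{sgb} (citing \cite[Proposition 2.2 and Remark 2.3]{CGK04}); since $(W',\omega')$ agrees with the cylindrical convex end of $W$ outside a compact set, that finite-type argument applies verbatim and gives that $\omega'$ itself is \acs{sgb}.

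Second, your step ``a Mayer--Vietoris / homology computation shows $H^2(N_1)=0$'' is asserted rather than proved, and it is genuinely delicate: attaching a handle of index $k\geq 3$ performs surgery on $S^{k-1}\subset N_1$ and glues in $D^k\times S^{2n-k-1}$, which can contribute to $H^2$ precisely when $2n-k-1=2$. Since Weinstein handles have index $k\leq n$, this is excluded for $n\geq 4$, but in dimension $2n=6$ a $3$-handle can create $H^2$: for instance $W=T^*S^3$ has only a $0$- and a $3$-handle, yet its ideal contact boundary is $S^2\times S^3$, which has $H^2\neq 0$, so \Cref{thm:smooth_manifold_sympl_nonleaf} does not apply to its blowup. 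This gap is arguably inherited from the corollary as stated (the paper's own headline example, $\RR^{2n}$ blown up at a point, has $N_1=S^{2n-1}$ and is unaffected), but a complete proof should either restrict the dimension, add a hypothesis on the ideal boundary, or carry out the surgery computation and isolate the exceptional cases. The remainder of your argument --- one end, $\pi_1(N_1)=0$ from the absence of $1$-handles, the hyperplane class of the exceptional $\CP^{n-1}$ surviving in $H^2$ of the truncation, and the observation that the blowup point can be taken in the compact core --- is correct and is what the corollary relies on.
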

In particular, for instance $\R^{2n}$ blown-up at a point admits a $\acs{sgb}$ symplectic structure but is not diffeomorphic to a symplectic leaf. 
In fact, although $(\R^{2n},\omega_{st})$ itself does not satisfy the conditions of Theorem \ref{thm:smooth_manifold_sympl_nonleaf}, part of the proof of the latter (and an explicit construction) gives the following:
\begin{cor}
\label{cor:R2n_non_proper_leaf}
$\RR^{2n}$ is not diffeomorphic to a proper symplectic leaf. 
On the other hand, $(\RR^{2n},\omega_{std})$ can be realized as a non-proper symplectic leaf.
\end{cor}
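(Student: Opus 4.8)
The plan is to treat the two assertions separately, using the end-analysis from the proof of \Cref{thm:smooth_manifold_sympl_nonleaf} for the negative statement and an explicit linear foliation for the positive one. \textbf{The non-proper realization.} For the constructive half I would work on the odd torus $T^{2n+1} = \RR^{2n+1}/\ZZ^{2n+1}$ with linear coordinates $x_1,\dots,x_{2n+1}$. Fix the constant $2$-form
\[
\Omega \;=\; \sum_{i=1}^{n} dx_{2i-1}\wedge dx_{2i},
\]
whose kernel is the line spanned by $\partial_{x_{2n+1}}$, together with a constant, nowhere-vanishing $1$-form $\alpha = \sum_i a_i\, dx_i$ such that $a_{2n+1}\neq 0$ and $a_1,\dots,a_{2n+1}$ are rationally independent, and set $\F := \ker\alpha$. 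Since $\alpha$ is closed and its periods are rationally independent, every leaf of $\F$ is a densely immersed copy of $\RR^{2n}$, hence a non-proper leaf. Because $a_{2n+1}\neq 0$ we have $\ker\Omega \not\subset \ker\alpha$, so $\Omega$ restricts to a non-degenerate constant $2$-form on each leaf; being a constant symplectic form on $\RR^{2n}$ it is linearly symplectomorphic to $\omega_{std}$. Thus $(T^{2n+1},\F,\Omega)$ realizes $(\RR^{2n},\omega_{std})$ as a dense, hence non-proper, symplectic leaf, and the only inputs are the standard facts that a closed nowhere-vanishing $1$-form with rationally independent periods has dense leaves and that constant symplectic forms on $\RR^{2n}$ are linearly standard.

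\textbf{The obstruction to a proper realization.} Suppose, for contradiction, that $L\cong\RR^{2n}$ is a \emph{proper} symplectic leaf of a symplectic foliation on a closed $M^{2n+1}$. By \Cref{lemma:sympl_leaves_sgb} the induced form $\omega$ is \acs{sgb}, and since $H^2(\RR^{2n})=0$ it is exact. I would first invoke the end-analysis developed in the proof of \Cref{thm:smooth_manifold_sympl_nonleaf}: a \acs{sgb} symplectic structure on the cylindrical end $S^{2n-1}\times[0,\infty)$ is, after a homotopy of the kind produced by \Cref{thm:strongly_geom_bounded}, convex contact at infinity, so that for $n\geq 2$ (where $S^{2n-1}$ is simply connected with vanishing $H^2$) the leaf $(L,\omega)$ is an exact symplectic manifold with a single convex end, i.e.\ a Liouville-type $\RR^{2n}$. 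The point of \emph{properness} is that it upgrades this intrinsic picture to a transverse one: near $L$ the foliation has trivial holonomy, so it is cut out by the level sets of a function on a product neighborhood $L\times(-\epsilon,\epsilon)$, and the convex end of $L$ acquires a genuine transverse collar in $M$. I would then re-run the core mechanism of \Cref{thm:smooth_manifold_sympl_nonleaf}: there the obstruction to being a leaf was extracted from the nontrivial topology of the compact core, and the role of properness here is to supply the same obstruction for the topologically trivial core of $\RR^{2n}$, by forcing the convex end to accumulate on the limit set $\Lambda := \overline{L}\setminus L$ in a manner incompatible with the leafwise Liouville expansion and the finiteness of the ambient volume.

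\textbf{Main obstacle and low dimensions.} The crux is precisely the analysis of $\Lambda$: properness controls the \emph{intrinsic} geometry through \acs{sgb}, but it still allows distinct sheets of the end to become arbitrarily $M$-close near $\Lambda$, so the contradiction cannot be purely local and must exploit the outgoing (convex) nature of the end together with the transverse structure along its entire length. I expect this step — ruling out that a proper, exact, convex end limits onto a compact saturated set inside a closed foliated manifold — to be the genuine difficulty and the place where the hypotheses interact. Finally, the case $n=1$ lies outside the end-analysis above, since $S^1$ has nontrivial $\pi_1$; I would dispatch it by hand using the structure theory of codimension-$1$ foliations of closed $3$-manifolds (Novikov and Reeb components) together with \acs{sgb} to exclude directly a proper plane leaf.
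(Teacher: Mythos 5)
Your construction of the non-proper realization is correct and is essentially the paper's own: both produce a linear codimension-one foliation of $\TT^{2n+1}$ with $\QQ$-linearly independent slopes carrying a constant leafwise symplectic form, whose leaves are dense (hence non-proper) copies of $(\RR^{2n},\omega_{std})$. Describing the leaves as $\ker\alpha$ for a constant closed $1$-form rather than as graphs over $\RR^{2n}$ is an immaterial difference.

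The negative half, however, has a genuine gap. The paper deduces it in one line from \Cref{prop:smooth_manifold_sympl_non_proper_leaf}, whose hypotheses (for $n\geq 2$: one end modelled on $\SSS^{2n-1}\times[0,\infty)$ with $\SSS^{2n-1}$ simply connected and with trivial $H^2$) impose \emph{no} condition on the compact core; the extra core hypothesis in \Cref{thm:smooth_manifold_sympl_nonleaf} is needed only to exclude \emph{non-proper} leaves, which is precisely why $\RR^{2n}$ escapes that part but not this one. You have this backwards: you write that the obstruction in the proper case ``was extracted from the nontrivial topology of the compact core'' and that properness must somehow ``supply the same obstruction'' for the trivial core of $\RR^{2n}$. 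The actual mechanism is: properness plus finitely many ends forces depth $1$ and spiraling onto a closed leaf $L_\infty$ (\Cref{prop:proper_depth_one}, \Cref{thm:SpiralingTheorem}); Reeb stability over a simply connected transversal chart together with $H^2(N)=0$ yields primitives $\alpha$ of $\omega$ on the foliated chart and $\beta$ on the end that agree on the overlap; Stokes' theorem then forces $\int_{N_m}\alpha\wedge\omega^{n-1}$ to diverge (the spiraling end sweeps infinitely many plaques of uniformly positive volume) while continuity at $L_\infty$ forces it to converge. None of this involves convexity at infinity. Your proposed route instead asserts that a \acs{sgb} form on the cylindrical end becomes, after a homotopy ``of the kind produced by \Cref{thm:strongly_geom_bounded}'', convex contact at infinity --- but that theorem runs in the opposite direction (contact-type exhaustion implies \acs{sgb}), the assertion is unjustified, and you then concede you cannot close the resulting argument about the limit set. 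So the first assertion of the corollary is not actually proved. (You do correctly flag that $n=1$ falls outside the hypothesis since $\pi_1(\SSS^1)\neq 0$, a case the paper's one-line deduction silently skips.)
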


Recall that a leaf $L$ is \emph{proper} if it does not accumulate onto itself.

Let's now go back to \ref{Q3}, to which we give a positive answer using again the symplectic blow-up procedure, but this time at infinitely many points. 

\begin{thm}
    \label{thm:blowup_non_leaves}
    Let $(W,\omega)$ be a symplectic manifold admitting an exhaustion of contact type.
    Then there is a \acs{sgb} symplectic manifold $(W',\omega')$, with $W'$ obtained from $W$ by (complex) blowup at infinitely many points, which is not symplectomorphic to a leaf of a symplectic foliation.
\end{thm}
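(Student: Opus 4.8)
The plan is to realise $W'$ as the blow-up of $(W,\omega)$ at a sequence of points escaping to infinity, with the blow-up sizes chosen so that the collection of symplectic areas of the exceptional spheres accumulates, from above, onto a value that is itself never attained; the \acs{sgb} hypothesis will then be exactly what a leaf cannot reconcile with such an \emph{area spectrum}. Concretely, I fix an exhaustion of contact type $\mathcal{K}=\{K_n\}$ and pick one point $p_n\in K_n\setminus\Op(K_{n-1})$ for each $n$, so that $p_n\to\infty$, and I let $W'$ be the complex symplectic blow-up at all the $p_n$. Running the proof of \Cref{thm:strongly_geom_bounded} — which on each shell $K_n\setminus\Op(K_{n-1})$ merely rescales $\omega$ by a constant $c_n$ and stretches collars near $\partial K_n$ — and performing each blow-up \emph{inside} the corresponding shell, where the form is a fixed multiple of $\omega$ and the local model is the standard one, produces a \acs{sgb} form $\omega'$ for which the exceptional sphere $E_n$ has $\omega'$-area $a_n$ depending only on $c_n$ and on the free blow-up parameter. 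I can therefore prescribe the $a_n$ to be any bounded sequence bounded away from $0$; I take them pairwise distinct with $a_n\searrow a_\infty>0$, choosing $a_\infty$ strictly below the area of every closed $J$-holomorphic curve supported in the compact core of $(W,\omega')$, and keeping all $a_n$ inside a fixed interval $[a_\infty,2a_\infty]$. The latter bound forces the exceptional divisors to have uniformly bounded curvature and uniformly positive injectivity radius, and since points in distinct shells are uniformly separated in the \acs{sgb} metric, $\omega'$ is indeed \acs{sgb}.

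The invariant I track is the set $A\subset(0,\infty)$ of symplectic areas of closed $J$-holomorphic curves of $(W',\omega')$, for a compatible $J$ adapted both to the \acs{sgb} structure and to the blow-ups, so that each $E_n$ is $J$-holomorphic. This set is a symplectomorphism invariant. Using the contact-type convexity of $(W,\omega)$ at infinity — which by a maximum-principle argument prevents closed curves from escaping to infinity — together with positivity of areas, every element of $A$ lies in the additive monoid generated by $\{a_n\}$ and by the finitely many core-areas. By the choice of $a_\infty$, the only elements of $A$ below $2a_\infty$ are the $a_n$ themselves, so $a_\infty=\inf\{t\in A:t<2a_\infty\}$ is a limit point of $A$ while $a_\infty\notin A$: the spectrum $A$ is \emph{not closed}.

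The obstruction then runs as follows. Suppose $(W',\omega')$ were symplectomorphic to a leaf $L$ of a symplectic foliation on a closed manifold $M$. The divisors yield pairwise disjoint $J$-holomorphic spheres $E_n\subset L\subset M$ of areas $a_n<2a_\infty$. Because $M$ is compact and $L$ is \acs{sgb}, Gromov compactness is available inside the leaf, so (a subsequence of) the $E_n$ converges to a $J$-holomorphic cusp curve $C_\infty$ of area $a_\infty=\lim a_{n_k}$, lying in some leaf $L_\infty\subseteq\overline L$. The components of $C_\infty$ are closed $J$-curves in $L_\infty$ whose areas sum to $a_\infty$. If $L$ is recurrent, i.e. $L_\infty=L$, then each component area lies in $A$, and since $A$ is a monoid this forces $a_\infty\in A$ — contradicting that $A$ is not closed. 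This self-accumulating case is the clean heart of the argument, and notably the possible nodality or multiple covering of $C_\infty$ causes no trouble, precisely because $A$ is closed under addition.

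The main obstacle is the \emph{proper} case $L_\infty\neq L$, where the limit curve lives in a genuinely different leaf and a priori imposes no constraint on $A$. To handle it I would pass to a minimal set $Z\subseteq\overline L$: leaves of $Z$ are accumulated by $L$, so their area spectra are controlled from above by limits of areas of curves in $L$, hence by $A$, and minimality supplies the recurrence needed to rerun the Gromov-compactness argument \emph{within} $Z$. Transporting the area-$a_\infty$ curve between nearby leaves uses only transverse continuity of the leafwise symplectic form, guaranteed by smoothness of the foliated form on $M$; making this transfer quantitative, so as once more to conclude $a_\infty\in A$, is the technical crux, as is the accompanying verification that no stray low-area $J$-curves are created by the infinitely many blow-ups (i.e.\ that $A$ is exactly the asserted monoid). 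Throughout, the \acs{sgb} hypothesis is what makes both the Gromov compactness and the uniform geometric control along the recurrence legitimate.
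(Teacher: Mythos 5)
Your proposal takes a genuinely different route from the paper, and it has gaps that are not just technical polish. The paper's proof is entirely soft: it chooses the blow-up weights so that the exceptional divisors $C_m$ have $\omega'$-volumes that are pairwise separated by at least $1$ (in fact diverging, $m^4\pi^{2n}$), observes that $H_{2n-2}(W')$ is generated by the classes $[C_m]$ and that the $\omega'$-volume of a homology class is a symplectomorphism invariant, and then derives a contradiction in two cases. In the proper case, \Cref{prop:proper_depth_one} and \Cref{cor:finite_depth_sympl_approx_period} give symplectic almost periodicity of the end, so the images $h^k(C)$ of a fixed exceptional divisor have volumes that converge, which is incompatible with the volume separation. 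In the non-proper case, Reeb stability around a neighborhood of one $C_m$ plus self-accumulation of the leaf produces infinitely many copies of $\CP^{n-1}$ in $L$ with volumes within $\varepsilon$ of each other, again a contradiction. No almost complex structures, holomorphic curves, or compactness theorems are needed.

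The concrete gaps in your argument are the following. First, you explicitly leave the proper case ($L_\infty\neq L$) unproved, calling the quantitative transfer of the limit curve between leaves of a minimal set ``the technical crux''; but this is not a minor refinement --- it is half of the theorem, and the paper disposes of it with the almost-periodicity result rather than with curves. Second, the set $A$ of areas of closed $J$-holomorphic curves for a \emph{fixed} compatible $J$ is not a symplectomorphism invariant (it depends on $J$), and your claim that $A$ is exactly the monoid generated by the $a_n$ and finitely many core areas amounts to a complete classification of closed $J$-curves in $(W',\omega')$, which is not available for a general $(W,\omega)$ admitting a contact-type exhaustion. Third, for $2n>4$ the exceptional divisors are copies of $\CP^{n-1}$, not $J$-holomorphic spheres, so the objects you feed into Gromov compactness are not the divisors themselves but (say) lines inside them, and the bookkeeping of areas has to be redone. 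Fourth, your choice of areas $a_n\searrow a_\infty$ within $[a_\infty,2a_\infty]$ is essentially the opposite of what the soft argument needs: once infinitely many exceptional divisors have volumes within $\varepsilon$ of each other, the paper's contradiction evaporates, so your construction commits you to the full holomorphic-curve machinery with no fallback. If you want to salvage a curve-theoretic proof, the foliated Gromov compactness alluded to in the paper's acknowledgements (\cite{AlbNieInPrep}) is the right tool, but the homological volume argument is both more elementary and complete.
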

More precisely, if $\{K_n\}_{n \in \N}$ is the given exhaustion of contact type, then we symplectically blowup at a countable sequence of points $\{p_n\}_{n \in \N}$ where $p_n \in K_{n} \setminus K_{n-1}$. 
At each of these points the blowup is performed so that the resulting copies of $\CP^{n-1}$ have different $\omega'$-volumes. 
The difference in volume is essential as the result of blowing up with the same size can sometimes be realized as a leaf. 
Indeed, we have the following positive answer to Question \ref{Q3}:

\begin{cor}
\label{cor:real_space_blown_up}
Let $W$ be the result of (complex) blowing up $\R^{2n}$ at a sequence of points going to infinity. 
Then $W$ is diffeomorphic to a symplectic leaf, but also admits a \acs{sgb} symplectic form $\omega$ for which $(W,\omega)$ is not symplectomorphic to a symplectic leaf.
\end{cor}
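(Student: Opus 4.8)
The plan is to prove the two assertions separately: the claim that $(W,\omega)$ need not be a leaf is a direct application of Theorem~\ref{thm:blowup_non_leaves}, while the claim that $W$ is diffeomorphic to \emph{some} symplectic leaf requires an explicit construction, namely a symplectic blowup performed along a transversal of the non-proper realization of $\R^{2n}$ supplied by Corollary~\ref{cor:R2n_non_proper_leaf}.

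For the negative part I would apply Theorem~\ref{thm:blowup_non_leaves} to $(\R^{2n},\omega_{std})$. This manifold admits an exhaustion of contact type: take $K_n=\{|x|\le n\}$, whose boundaries are the round spheres, which are of convex contact type for the radial Liouville vector field. The theorem then yields a \acs{sgb} symplectic form $\omega$ on the blowup of $\R^{2n}$ at a sequence of points $p_n\in K_n\setminus K_{n-1}$ — hence escaping to infinity — whose exceptional divisors have pairwise distinct $\omega$-volumes, and for which the resulting manifold is not symplectomorphic to a leaf. Since the diffeomorphism type of a blowup of $\R^{2n}$ at a discrete sequence of points escaping to infinity does not depend on the chosen sequence, this manifold is diffeomorphic to $W$, and $\omega$ is the \acs{sgb} form required by the statement.

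For the positive part I would begin from a realization of $(\R^{2n},\omega_{std})$ as a \emph{non-proper} symplectic leaf $L$ in a closed foliated symplectic manifold $(M,\F)$, as produced in Corollary~\ref{cor:R2n_non_proper_leaf}. Non-properness allows one to arrange a closed transversal $\gamma\cong\SSS^1$ meeting $L$ in an infinite set $\gamma\cap L=\{x_k\}_k$ that accumulates in the ambient topology; since $L$ carries the finer leaf topology, the $x_k$ are distinct and leave every leafwise-compact set, i.e.\ they are points of $\R^{2n}$ going to infinity. The decisive move is to perform a symplectic blowup of $M$ \emph{along} $\gamma$, with a fixed size chosen constant along $\gamma$. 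Transversality identifies the rank-$2n$ normal bundle $\nu(\gamma)$ canonically with the leafwise tangent bundle $T\F|_\gamma$, which carries the leafwise $\omega$-compatible complex structure and thus furnishes exactly the data for a complex/symplectic blowup. In a foliation box $(-\varepsilon,\varepsilon)\times D^{2n}$ around $\gamma$ the blowup becomes $(-\varepsilon,\varepsilon)\times\widetilde{D^{2n}}$, so it fibers over $\gamma$ with exceptional locus $\gamma\times\CP^{n-1}$ and restricts on each plaque to a pointwise complex blowup at the origin. The upshot is a closed manifold $\widetilde M$ with an induced symplectic foliation $\widetilde\F$ whose leaf lying over $L$ is $\R^{2n}$ blown up, \emph{with equal sizes}, at the sequence $\{x_k\}$ escaping to infinity, hence diffeomorphic to $W$.

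The main obstacle is this last construction, and the points to verify are: that the non-proper realization of Corollary~\ref{cor:R2n_non_proper_leaf} can be arranged to admit a closed transversal meeting $L$ in an accumulating infinite set; that blowing up $M$ along $\gamma$ produces a genuinely smooth symplectic foliation on a closed manifold; and that taking the blowup size constant along $\gamma$ keeps the leafwise symplectic forms smooth across the exceptional loci, forcing all exceptional divisors inside $L$ to have equal volume. This equal-volume feature is precisely what makes the leaf realizable here, in contrast with the distinct-volume obstruction exploited in Theorem~\ref{thm:blowup_non_leaves}. The remaining bookkeeping — identifying the two resulting manifolds with the single diffeomorphism type $W$ — is routine.
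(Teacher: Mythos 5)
Your proposal is correct and follows essentially the same route as the paper: the negative part is the same application of Theorem~\ref{thm:blowup_non_leaves}, and the positive part is the paper's construction, namely an $\SSS^1$-equivariant (constant-size) symplectic blowup along a closed transversal to the dense-leaved foliation of $\TT^{2n+1}$ from Corollary~\ref{cor:R2n_non_proper_leaf}. The points you flag for verification are exactly what the paper's explicit linear model handles for free, since there the transversal has a foliated neighborhood symplectomorphic to $(B^{2n}_\delta\times\SSS^1,\omega_{std}^{B})$ with the leafwise form literally independent of the $\SSS^1$-coordinate, so the equal-size blowups glue smoothly.
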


The fact that $W$ is diffeomorphic to a symplectic leaf follows from an explicit construction. 
In this case, the induced volumes of the $\CP^{n-1}$ resulting from blowing up are all the same. 
In the second part of the conclusion, $\omega$ is constructed so that the induced volumes are on the contrary different, so that the statement follows from Theorem \ref{thm:blowup_non_leaves}.

We emphasize again that unless explicitly stated otherwise, throughout this paper ``foliation'' will always mean smooth codimension-$1$ foliation on a closed ambient manifold.

\paragraph*{Outline} In \Cref{sec:ends_and_accumulation} we recall some notions and results from smooth foliation theory, describe the symplectic (almost-)periodicity notion for ends of leaves of symplectic foliations, and prove that proper leaves of symplectic foliations are ``symplectically almost periodic'' (see \Cref{thm:sympl_proper_leaves_approx_periodic_end}).
In \Cref{sec:geom_bound} we define strongly geometrically bounded symplectic manifolds, and prove \Cref{thm:strongly_geom_bounded}.
In \Cref{sec:not_diffeo_sympl_leaves} we describe topological obstructions for a manifold to be diffeomorphic to a proper or non-proper symplectic leaf, proving in particular \Cref{thm:smooth_manifold_sympl_nonleaf} and \Cref{cor:R2n_non_proper_leaf} above.
Lastly, in \Cref{sec:nonleaves_blowup} we study examples coming from the symplectic blowup construction, thus proving \Cref{thm:blowup_non_leaves} and \Cref{cor:real_space_blown_up}.

\subsection*{Acknowledgements}
We are grateful to Mélanie Bertelson for pointing out to us the question motivating this paper, i.e.\ of what symplectic manifolds can arise as leaves in symplectic foliations on closed manifolds, as well as for very useful discussions in relation to the examples in \Cref{thm:smooth_manifold_sympl_nonleaf}. 
We would also like to thank Klaus Niederkr\"uger for communicating to us the observation, due to Fran Presas, that leaves of symplectic foliations on closed manifolds are geometrically bounded, and for sending to us a draft of his work \cite{AlbNieInPrep}, which elaborates on the consequences of this observation by laying the foundations for the use of pseudo-holomorphic curves on (strong) symplectic foliations relying on this observation.
This work is partially supported by the ERC Grant ``Transholomorphic''.

\section{Ends of manifolds and accumulation of leaves}
\label{sec:ends_and_accumulation}

Intuitively, the ends of a manifold represent the (topologically) different ways to go to infinity.
In order to make this precise consider an exhaustion by compacts $\calK=\{K_i\}_{i \in \N}$ of a manifold $W$. The \emph{endset} $\mathcal{E}_\calK(W)$ is the set of sequences
\[ U_1 \supset U_2 \supset U_3 \supset \dots,\]
where $U_i$ is a connected component of $W \setminus K_i$;
each element $e$ of $\mathcal{E}_\calK(W)$ is called an \emph{end} of $W$.
One can see that the endsets $\mathcal{E}(W)$ associated to two different exhaustion by compacts are in natural bijection; therefore, we will just denote it $\mathcal{E}(W)$ from now on.
We lastly call \emph{neighborhood} of an end $e = \{U_i\}_{i \in \N}$ any open set $V$ such that $U_n \subset V$ for some $n$.

A leaf $L$ of a foliation, or more precisely an end $e$ of $L$, is said to \emph{accumulate} onto a leaf $L'$ if
\[ \overline{U} \cap L' \neq \emptyset,\]
for some (and hence any) neighborhood $U$ of $e$. Since on a closed manifold any non-compact leaf must accumulate we obtain the following invariant. A leaf $L$ is said to be at \emph{depth $0$} if it is compact, and at \emph{depth $k$} if $\overline{L} \setminus L$ is a union of leaves at depth $<k$; lastly, $L$ is at \emph{infinite depth} if it is not at any finite depth.

The way leaves at finite depth can accumulate closely resembles the way the interior leaves of a Reeb component spiral around the boundary. 
That is, each end has a neighborhood which spirals (Definition \ref{def:Spiraling}) onto a leaf at lower level.

\begin{thm}[{\cite[Theorem 8.4.6]{CanCon00}}]
\label{thm:SpiralingTheorem}
If $L$ is a leaf at depth $k$, then
\[ L = A \cup B^1\cup\dots \cup B^q,\]
where $A$ is a compact, connected, $(n-1)$-dimensional manifold with boundary components $N^1,\dots,N^q$ and
\begin{enumerate}
\item $A \cap B^j = N^j$, $1 \leq j \leq 1$;
\item $B^i \cap B^j = \emptyset$, $i \neq j$;
\item $B^j$ spirals on a leaf $L^j$ at depth \footnote{Note that the statement in \cite{CanCon00} talks about levels instead of depth. However, in light of \cite[Corollary 8.3.16]{CanCon00} they are equivalent.} at most $k-1$, $1 \leq j \leq q$;
\item for at least one value of $j$, $L^j$ is at depth $k-1$.
\end{enumerate}
\end{thm}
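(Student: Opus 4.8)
The plan is to argue by induction on the depth $k$, reducing the global decomposition to a local analysis of how $L$ approaches each leaf in its limit set $\overline{L}\setminus L$ inside the closed ambient manifold $M$. The base case $k=0$ is immediate: a depth-$0$ leaf is compact, so one takes $A = L$ and $q=0$. For the inductive step, the first thing I would establish is that a leaf of finite depth is \emph{proper}, i.e.\ it does not accumulate onto itself, equivalently that $L$ is locally closed in $M$. This is the structural fact that makes the end behavior tractable, and it should follow from the definition of depth together with the observation that the union of leaves of depth $\le k-1$ is a closed, saturated subset of $M$ in whose complement $L$ sits as a relatively closed leaf.

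Granting properness, the next step is the local model near a limit leaf. Fix $L' \subset \overline{L}\setminus L$ of depth $\le k-1$ and choose a transversely oriented foliated tubular neighborhood (passing to the orientation double cover if necessary), written schematically as $L' \times (-\varepsilon,\varepsilon)$. Since $L$ accumulates on $L'$ but is disjoint from it, the transverse levels at which $L$ meets a small transversal form a sequence converging to $0$. The heart of the spiraling picture is that this convergence is \emph{monotone} and \emph{one-sided}: the holonomy of $L'$ in the approaching direction must be attracting, forcing $L$ to wrap around $L'$ exactly as the interior leaves of a Reeb component wrap around its boundary torus. This is precisely the content of Definition \ref{def:Spiraling}, and verifying it is where the holonomy analysis does the real work.

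With the local picture in hand I would globalize. The limit set $\overline{L}\setminus L$ is closed in the compact manifold $M$, hence compact, and consists of leaves of depth $\le k-1$. Deleting small open spiraling neighborhoods of the finitely many limit leaves that $L$ approaches leaves a subset of $L$ that is closed in $M$ and bounded away from $\overline{L}\setminus L$, hence compact; this is the core $A$. Each deleted piece is a connected spiraling end $B^j$ attached to $A$ along one of the finitely many boundary components $N^j$, yielding conditions (1) and (2); condition (3) is the local spiraling just established. Finally, condition (4) holds because, by the definition of depth $k$, $L$ must accumulate on at least one leaf of depth exactly $k-1$ — otherwise $\overline{L}\setminus L$ would consist only of leaves of depth $\le k-2$ and $L$ would have depth $\le k-1$, a contradiction.

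The main obstacle I anticipate is twofold, and both parts live in the local-to-global passage: first, proving the \emph{finiteness} of the number $q$ of ends, equivalently that $A$ is a compact manifold with only finitely many boundary components; and second, upgrading mere accumulation into the rigid \emph{monotone one-sided spiraling}. Both rely on the finer structure theory of the holonomy pseudogroup of a finite-depth foliation — in particular on properness and a local Reeb-stability type statement near the limit leaves — rather than on soft topological input, so the careful bookkeeping of holonomy in a neighborhood of $\overline{L}\setminus L$ is where I expect the argument to demand the most care.
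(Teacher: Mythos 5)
First, a point of order: the paper does not prove this statement at all --- it is quoted from \cite[Theorem 8.4.6]{CanCon00}, so there is no internal proof to compare yours against; the actual argument occupies a substantial part of Chapter 8 of that book. Your outline reproduces the standard strategy at a high level, but the two items you yourself flag as ``anticipated obstacles'' are not loose ends to be tidied up later: they \emph{are} the content of the theorem, and your sketch does not close either of them. Concretely, the finiteness of $q$ and the compactness of the nucleus $A$ do not follow from ``deleting small spiraling neighborhoods of the finitely many limit leaves''; they require the Dippolito (octopus) decomposition of the completion of the open saturated set containing $L$ in the complement of the lower-depth leaves, which is what produces a compact nucleus with finitely many arms, each arm a foliated interval bundle giving one $B^j$ with juncture $N^j$. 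Even to set that up you need to know that $\overline{L}\setminus L$ is a \emph{finite} union of leaves and that the union of leaves of depth $\le k-1$ is closed and saturated; you assert the latter as an ``observation'', but it is a nontrivial structural result about totally proper leaves (cf.\ \cite[Corollaries 8.3.10--8.3.12]{CanCon00}), not a formal consequence of the definition of depth. Relatedly, your opening claim that finite depth implies properness is circular as stated: with the paper's definition of depth one must first establish that such leaves are \emph{totally} proper, and the equivalence of ``level'' and ``depth'' invoked in the paper's footnote is itself part of the structure theory you are trying to prove.

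The second gap is the passage from ``$L$ accumulates one-sidedly on $L'$'' to the monotone spiraling of Definition \ref{def:Spiraling}. Writing that ``the holonomy of $L'$ in the approaching direction must be attracting'' is precisely the statement requiring proof: it rests on the fact that for a totally proper leaf the germinal holonomy of a limit leaf, restricted to the side of approach within the relevant arm, is generated by a single contraction, and this can fail for non-proper leaves or leaves at infinite depth. Without an argument specific to the finite-depth hypothesis (via the holonomy pseudogroup of the completed saturated set), the monotone, injective-on-each-$B_i$ structure of the spiral, and hence conditions (iii)--(iv) of Definition \ref{def:Spiraling}, are not established. In short, the skeleton of your induction is the right shape, but as written it defers exactly the two theorems (Dippolito's decomposition and the holonomy structure of totally proper leaves) whose combination is \cite[Theorem 8.4.6]{CanCon00}.
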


Before stating the precise definition of spiraling let us point out two properties of the spiraling in a Reeb component. 
Firstly, on a neighborhood of the boundary there is a projection (e.g.\ along the leaves of an auxiliary transverse $1-$dimensional foliation) onto the boundary leaf. 
Secondly, the end of each interior leaf can be written as an infinite union of diffeomorphic pieces and the projection is injective on each piece.

\begin{definition}
\label{def:Spiraling}
A neighborhood of an end $B \subset L$ is said to \emph{spiral} onto a leaf $\wtd{L}$ if there exists a projection $\pi:B \to \wtd{L}$ (obtained by projecting along the leaves of an auxiliary transverse $1-$dimensional foliation), and a (closed, connected) codimension-$1$ submanifold $N \subset L$ called the \emph{juncture} such that:
\begin{enumerate}[label=(\roman*)]
    \item There exists a decomposition $B = \bigcup_{j=0}^\infty B_j$, with $\partial B_j = N_{j} \sqcup N_{j+1}$ and $\operatorname{int}(B_i) \cap \operatorname{int}(B_j) = \emptyset$ if $i \neq j$;
    \item The projection $\pi$ maps each $N_i$ diffeomorphically onto $N$ for each $i \in \N$;
    \item The restriction of $\pi$ to $B_i \setminus N_{i+1}$ is injective for each $i \in \N$;
    \item For each $p \in \wtd{L}$, the sequence $\pi^{-1}{p} = \{q_i\}_{i \in \N}$, where $q_i \in B_i$, converges monotonically (w.r.t. the coorientation of $\F$) to $p$.
\end{enumerate}
\end{definition}
Evidently, the interior of each $B_i$ is diffeomorphic to $L\setminus N$, and $B$ is an \emph{infinite repetition} of $L$:
\[
B = B_0 \cup_{N_1} B_1 \cup_{N_2} B_2 \cup_{N_3} \ldots
\]
In this case we also say that the e (of which $B$ is a neighborhood) is \emph{periodic with period $L$}.

\Cref{thm:SpiralingTheorem} has a nice consequence on the asymptotic behaviour of proper leaves with finitely many ends, which appeared as part of \cite[Theorem 2.5]{MCS20}. 
For the reader's convenience, we restate it here explicitly and give a detailed proof.

\begin{prop}
\label{prop:proper_depth_one}
Let $L$ be a proper leaf with finitely many ends. 
Then, $L$ is totally proper and $\depth(L) = 1$.
\end{prop}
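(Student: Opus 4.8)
The plan is to reduce the proposition to a single statement about the ends of $L$, and then to combine a topological counting argument with an analysis of the transverse dynamics near each end. Since a compact leaf has depth $0$ and no ends, I may assume that $L$ is non-compact, so it has $k\ge 1$ ends $e_1,\dots,e_k$. Because the ambient manifold is closed, the limit set $Z_i:=\bigcap_n \overline{U^i_n}$ of each end (where $\{U^i_n\}_n$ is a neighbourhood basis of $e_i$ in $L$) is a non-empty, compact, saturated set; as $L$ is proper, no end accumulates onto $L$, so $Z_i\cap L=\emptyset$, and therefore $\overline{L}\setminus L=\bigcup_{i=1}^k Z_i$. The whole proposition then follows from the single claim that \emph{each end $e_i$ has a neighbourhood that spirals, in the sense of \Cref{def:Spiraling}, onto a compact leaf, and that $Z_i$ equals this leaf}. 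Indeed, granting this, $\overline L\setminus L$ is a finite union of compact leaves, i.e.\ of leaves of depth $0$, whence $\depth(L)=1$; and every leaf of $\overline L$ is then either $L$ (proper by hypothesis) or one of the compact limit leaves (proper because compact), so $L$ is totally proper.

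One half of the claim — that a spiralling end must limit onto a \emph{compact} leaf — is the easy direction, and it is exactly where the hypothesis of finitely many ends enters. Fix an end $e$ and suppose it spirals onto a leaf $\wtd L$. By \Cref{def:Spiraling} a neighbourhood $B$ of $e$ decomposes as an infinite repetition $B=B_0\cup_{N_1}B_1\cup_{N_2}\cdots$, with each $\operatorname{int}(B_j)$ diffeomorphic to $\wtd L\setminus N$ and consecutive blocks glued only along the junctures $N_j\cong N$. If $\wtd L$ were non-compact, then $\wtd L\setminus N$ would carry an end disjoint from the juncture $N$; this end survives inside every block $B_j$ and is not closed off by the gluing, so the repetition would contribute infinitely many distinct ends to $L$, contradicting $k<\infty$. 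Hence $\wtd L$ is compact, and since the blocks converge to $\wtd L$ monotonically by \Cref{def:Spiraling} the limit set of $e$ coincides with $\wtd L$.

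The main obstacle is the other half of the claim: \emph{producing} the spiralling structure from properness alone. The route I would pursue is to fix a point $p\in Z$ (for $Z$ the limit set of a single end) lying on a leaf $\wtd L$, take a foliated chart around $p$ with transverse arc $\tau$, and study the sequence $L\cap\tau$, which accumulates exactly on $Z\cap\tau$. Properness guarantees $p\notin L$, and I would use it to show that this accumulation is \emph{monotone} with respect to the coorientation, so that a well-defined first-return holonomy on $\tau$ carves $B$ into diffeomorphic fundamental domains $B_j$ separated by a compact connected juncture $N$ — precisely the data of \Cref{def:Spiraling}. The delicate point, where finitely many ends must be combined with properness a second time, is to rule out that $L$ accumulates transversally onto a Cantor set (an exceptional minimal set) or onto infinitely many distinct leaves: in either case the end would branch into infinitely many ends of $L$. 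I expect this transverse analysis — showing that $Z\cap\tau$ reduces to a single point and that the return map is genuinely a contraction toward it — to be the technical heart of the argument. Once it is in place, the spiralling decomposition (equivalently, the one furnished by \Cref{thm:SpiralingTheorem} after $L$ is known to have finite depth) yields the claim and closes the proof.
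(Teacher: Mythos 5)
There is a genuine gap. Your proposal correctly identifies what needs to be proved, but the entire technical content is deferred: the paragraph beginning ``The main obstacle\dots'' describes a route you \emph{would} pursue and ends with ``I expect this transverse analysis \dots to be the technical heart of the argument,'' without carrying it out. In particular, two essential steps are missing. First, you never actually produce the spiralling structure; note that you cannot simply invoke \Cref{thm:SpiralingTheorem} here, since that theorem applies to leaves already known to be at finite depth, which is (essentially) the conclusion you are trying to establish --- so your parenthetical ``equivalently, the one furnished by \Cref{thm:SpiralingTheorem} after $L$ is known to have finite depth'' is circular as stated. Second, and more seriously, ruling out that the end accumulates transversely onto a Cantor set is not a counting-of-ends matter: an exceptional local minimal set sitting in the limit set of a \emph{single} end of $L$ does not in any obvious way force $L$ to have infinitely many ends (the Cantor set of ends in Duminy's theorem belongs to the semiproper leaves \emph{inside} the exceptional minimal set, not to $L$). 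Excluding this case genuinely requires Duminy's theorem, which is a deep $C^2$ result and which your sketch neither invokes nor replaces.

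For comparison, the paper's proof avoids constructing the spiralling by hand and instead runs entirely through the Candel--Conlon structure theory: properness makes $L$ a local minimal set, so $\overline{L}$ is a \emph{finite} union of local minimal sets; the open type is excluded by properness, the exceptional type by Duminy's theorem, so $L$ is totally proper; totally proper implies finite depth, and finitely many ends then forces $\depth(L)=1$ via \cite[Corollary 8.4.7]{CanCon00}. Your ``easy direction'' (a spiralling end with finitely many ends must limit onto a compact leaf) is a reasonable informal version of that last citation, but without the preceding structure theory it does not amount to a proof of the proposition.
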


\begin{proof}
We start by proving that $L$ is a \emph{totally proper} leaf, meaning that each leaf in the closure $\overline{L}$ is proper.
Since $L$ is a proper leaf, it is a local minimal set \cite[Proposition 8.1.19]{CanCon00}. 
Hence its closure $\overline{L}$ is a finite union of local minimal sets \cite[Corollary 8.3.12]{CanCon00}.
Recall now that a local minimal set can be of three types:
\begin{enumerate}
\item an open saturated set of $\F$;
\item a single proper leaf (cf.\ \cite[Proposition 8.1.19]{CanCon00} again);
\item an exceptional local minimal set, by which we mean that its closure is transversely a Cantor set.
\end{enumerate}

We claim that all the minimal sets of $\overline{L}$ are of the second type.
First, observe that a local minimal set contained in $\overline{L}$ cannot be of the first type, this would violate the properness of $L$. 
Moreover, Duminy's Theorem  \cite{CanCon02} tells us that $\overline{L}$ does not contain any exceptional minimal sets. 
We conclude that every leaf in $\overline{L}$ is proper, that is, $L$ is a totally proper leaf.

To see that $L$ is at depth $1$, observe that \cite[Corollaries 8.3.10 and 8.3.16]{CanCon00} imply that any totally proper leaf is at finite depth. Since $L$ has finitely many ends, one can then conclude from \cite[Corollary 8.4.7]{CanCon00} that $\depth(L) =1$. 
\end{proof}

For symplectic foliations the above discussion implies that the ends of totally proper leaves are not only smoothly periodic but also symplectically. We consider two notions of periodicity for symplectic leaves.

\begin{definition}
    \label{def:sympl_periodic_end}
An end $e$ of a symplectic manifold $(W,\omega)$ is called:
\begin{itemize}
        \item \emph{symplectically periodic} if it can be represented by a sequence $\{h^n(U)\}_{n \in \N}$ where $h:U \to h(U) \subset U$ is a symplectomorphism, and $U$ a neighborhood of $e$;
        \item \emph{symplectically almost periodic} if it can be represented by a sequence $\{h^n(U)\}_{n \in \N}$ where $h:U \to h(U) \subset U$ is a diffeomorphism and $U$ a neighborhood of $e$ such that: there exists a symplectic form $\omega_\infty \in \Omega^2(\overline{U} \setminus h(U))$ satisfying
        \[ (h^n)^*\omega \xrightarrow{n \to \infty} \omega_\infty,\]
        where the convergence is with respect to the $C^0$-Whitney topology on the compact set $\overline{U}\setminus h(U)$.
    \end{itemize}
\end{definition}

As the nomenclature suggest, a symplectically period end is in particular symplectically almost periodic.
The smoothness of the leafwise symplectic form together with Theorem \ref{thm:SpiralingTheorem} then immediately implies:
\begin{corollary}
\label{cor:finite_depth_sympl_approx_period}
Let $L$ be a leaf of a symplectic foliation $(\F,\omega)$ on a compact manifold. If $L$ is at finite depth then it has symplectically almost periodic ends.
\end{corollary}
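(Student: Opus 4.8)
The plan is to extract both the shift map and the limiting form of \Cref{def:sympl_periodic_end} directly from the spiraling structure supplied by \Cref{thm:SpiralingTheorem}. Since $L$ is at finite depth, that theorem decomposes $L = A \cup B^1 \cup \dots \cup B^q$ with $A$ compact, so every end of $L$ admits a spiraling neighborhood $B^j$; it therefore suffices to treat a single such end. Fix it, write $B = \bigcup_{j \ge 0} B_j$ for its spiraling neighborhood with juncture $N$ and $\partial B_j = N_j \sqcup N_{j+1}$, let $\wtd{L}$ be the leaf it spirals onto, and $\pi \colon B \to \wtd{L}$ the projection along the auxiliary transverse $1$-dimensional foliation, all as in \Cref{def:Spiraling}. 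I take $U = \operatorname{int}(B)$ as the neighborhood of the end.

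First I would build the shift diffeomorphism. By \Cref{def:Spiraling}(iv) every transverse flow line meets $B$ in a sequence $q_0 \in B_0, q_1 \in B_1, \dots$ converging monotonically to $\pi(q_0) \in \wtd{L}$; following the transverse foliation from $q_i$ to $q_{i+1}$ defines a smooth map $h$ with $h(B_j) = B_{j+1}$ and $\pi \circ h = \pi$. Then $h \colon U \to h(U) \subsetneq U$ is a diffeomorphism onto its image, $h^n(U) = \operatorname{int}\big(\bigcup_{j \ge n} B_j\big)$ represents the chosen end, and the compact set on which convergence must be verified is $\overline{U} \setminus h(U) = \overline{B_0}$. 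As candidate limit I take $\omega_\infty := \pi^*(\omega|_{\wtd{L}})$ on $\overline{B_0}$; because $\pi$ projects along a foliation transverse to $L$ it is a local diffeomorphism, so $\omega_\infty$ is genuinely symplectic.

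For the convergence itself, \Cref{def:Spiraling}(iii) makes $\pi$ restrict to a diffeomorphism $\operatorname{int}(B_n) \tois \wtd{L} \setminus N$; write $s_n \colon \wtd{L} \setminus N \to B_n \subset M$ for its inverse, regarded as an embedding into the ambient manifold $M$. Since $\pi \circ h^n = \pi$, on $\operatorname{int}(B_0)$ one has $h^n = s_n \circ \pi$, hence $(h^n)^*\omega = \pi^*\big(s_n^*\omega\big)$. It remains to see that $s_n^*\omega \to \omega|_{\wtd{L}}$: this follows once the sheets $s_n(\wtd{L} \setminus N) = B_n$ converge to $\wtd{L}$ in $C^1$, since pulling back a fixed smooth form is continuous in the $C^1$-topology of the embedding. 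Granting that, pulling back by the \emph{fixed} map $\pi$ gives $(h^n)^*\omega \to \pi^*(\omega|_{\wtd{L}}) = \omega_\infty$ uniformly on $\overline{B_0}$, which is exactly the $C^0$-Whitney convergence demanded by \Cref{def:sympl_periodic_end}.

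The step I expect to be the main obstacle is this $C^1$- (in fact $C^\infty$-) convergence of the spiraling sheets to $\wtd{L}$: \Cref{def:Spiraling}(iv) only records the $C^0$ convergence $q_i \to \pi(q_i)$, whereas the pullback of a $2$-form feels the derivative $ds_n$. I would upgrade it chart by chart: in a foliation chart the plaques of $L$ are the fibers of the transverse submersion sitting at heights $c_n \to 0$, and these fibers vary smoothly with the height, so $s_n \to \iota_{\wtd{L}}$ in $C^\infty$ on each chart; the compactness of $\overline{B_0}$ (equivalently of $\pi(\overline{B_0}) \subset \wtd{L}$) then turns this local control into the uniform convergence used above. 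Once this is established the remainder is formal.
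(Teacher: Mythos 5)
Your argument is correct and is precisely the one the paper intends: the paper gives no written proof beyond asserting that the corollary follows ``immediately'' from \Cref{thm:SpiralingTheorem} together with the smoothness of the leafwise symplectic form, and your construction of the shift $h$ from the spiraling data, the choice $\omega_\infty = \pi^*(\omega|_{\wtd{L}})$, and the chartwise $C^1$-convergence of the sheets $B_n$ to $\wtd{L}$ is exactly how that smoothness is meant to be used. No discrepancy to report.
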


Although we will not need it explicitly in the following, we point out that, in the setting of \emph{strong} symplectic foliations, one arrange that the fibers of the projection $\pi$ in Definition \ref{def:Spiraling} are tangent to the kernel of any closed extension of the leafwise symplectic form to the ambient manifold.
As the flow of any vector field in such kernel foliation preserves the leafwise symplectic structure, one gets the following stronger variant  of \Cref{cor:finite_depth_sympl_approx_period} in the strong symplectic foliated setting:
\begin{corollary}
    \label{cor:finite_depth_sympl_period}
    Let $L$ be a leaf of a strong symplectic foliation $(\calF,\omega)$ on a compact $M$.
    If $L$ is totally proper and at depth $1$, then $(L,\omega\vert L)$ has symplectically periodic ends.
\end{corollary}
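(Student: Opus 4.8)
The plan is to exploit the closed extension of the leafwise form coming from the \emph{strong} hypothesis, and to run the smooth spiraling picture of \Cref{thm:SpiralingTheorem} with the auxiliary transverse foliation chosen to be the kernel foliation of that extension, so that the resulting return map is not merely a diffeomorphism but a genuine symplectomorphism. Concretely: since $(\calF,\omega)$ is a strong symplectic foliation there is a closed $2$-form $\hat\omega \in \Omega^2(M)$ restricting to $\omega$ on each leaf. As $\hat\omega|_{T\calF} = \omega$ is nondegenerate, the line field $\calF^\perp := \ker\hat\omega$ is transverse to $\calF$ and, being $1$-dimensional, integrates to a transverse $1$-dimensional foliation. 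Because $L$ is assumed totally proper and at depth $1$, each end $e$ of $L$ has (by the Spiraling Theorem) a neighborhood $B$ spiraling onto a depth-$0$, hence compact, leaf $\wtd{L} \subset \overline{L}\setminus L$; following the remark preceding the statement, I would take the auxiliary foliation in \Cref{def:Spiraling} to be $\calF^\perp$, so that the projection $\pi\colon B \to \wtd{L}$ is along $\ker\hat\omega$ and $B = \bigcup_{j\ge 0} B_j$ with junctures $N_j$.

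Next I would let $h$ be the \emph{return map} of this spiral, i.e.\ the holonomy of $\calF$ that slides $B_j$ onto $B_{j+1}$ once around $\wtd{L}$ along $\calF^\perp$. By properties (i)--(iv) of \Cref{def:Spiraling} this is a diffeomorphism of $U := B$ onto $h(U) = \bigcup_{j\ge 1} B_j \subset U$, with $h^n(U) = \bigcup_{j\ge n} B_j$ a neighborhood basis of $e$; hence $\{h^n(U)\}_{n\in\N}$ represents $e$ exactly as demanded in \Cref{def:sympl_periodic_end}. Everything then reduces to checking that $h$ is a symplectomorphism for $\omega|_L$.

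The key step is that holonomy along $\calF^\perp = \ker\hat\omega$ is automatically symplectic. Near any point of $\wtd{L}$ I would pick coordinates $L_0 \times (-\epsilon,\epsilon)$ adapted to $\calF^\perp$, with the fibers $\{x\}\times(-\epsilon,\epsilon)$ tangent to $\calF^\perp$; writing $\hat\omega = \omega_t + dt\wedge\alpha_t$, the condition $\iota_{\partial_t}\hat\omega = 0$ forces $\alpha_t \equiv 0$, and then $d\hat\omega = 0$ forces $\partial_t\omega_t \equiv 0$, so that $\hat\omega = \mathrm{pr}^*\omega_0$ for a fixed symplectic form $\omega_0$ on $L_0$ and the projection $\mathrm{pr}\colon L_0\times(-\epsilon,\epsilon)\to L_0$. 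In these coordinates the fiber-holonomy keeps the $L_0$-coordinate fixed and therefore preserves $\mathrm{pr}^*\omega_0$, hence the induced leafwise form, tautologically. Equivalently, any local vector field $X$ spanning $\ker\hat\omega$ satisfies $\lie_X\hat\omega = d\,\iota_X\hat\omega + \iota_X d\hat\omega = 0$, so its flow preserves $\hat\omega$. Covering the compact leaf $\wtd{L}$ by finitely many such charts, $h$ is a finite composition of these fiber-holonomies and is thus a symplectomorphism, exhibiting $e$ as symplectically periodic.

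The main obstacle I anticipate is the compatibility asserted in the remark, namely that the smooth spiraling decomposition of \Cref{def:Spiraling} can genuinely be realized with $\calF^\perp$ as the auxiliary transverse foliation; once this is secured, the one remaining subtlety---that the ``transition time'' needed to pass from one sheet to the next varies over $\wtd{L}$, which would normally prevent a flow from being symplectic---evaporates precisely because, in the adapted coordinates above, $\hat\omega$ carries no $dt$-component and is $t$-independent, so that any fiber-preserving map is symplectic regardless of the transition time. I expect that normal-form observation to be the crux, with the rest being bookkeeping with the spiraling data already supplied by \Cref{thm:SpiralingTheorem}.
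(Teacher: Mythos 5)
Your argument is exactly the one the paper intends: the corollary is justified there by the remark immediately preceding it, namely that the spiraling projection of \Cref{def:Spiraling} can be taken along the kernel foliation of a closed extension $\hat\omega$ of the leafwise form, and that holonomy along this kernel preserves the leafwise symplectic structure (your Cartan-formula/normal-form computation is the fleshed-out version of this). Your write-up is correct and simply supplies the details the paper leaves implicit.
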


By combining \Cref{prop:proper_depth_one,cor:finite_depth_sympl_approx_period,cor:finite_depth_sympl_period} above, we conclude:
\begin{thm}
    \label{thm:sympl_proper_leaves_approx_periodic_end}
    Let $(W,\omega)$ be an open symplectic manifold with a finite number of ends.
    If it is symplectomorphic to a proper leaf of a symplectic foliation,
    then $\omega$ is symplectically almost periodic.
    Moreover, if the symplectic foliation is strong, then $\omega$ is symplectically periodic.
\end{thm}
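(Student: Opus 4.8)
The plan is simply to assemble the three results established above, the only genuine content beyond them being the observation that both periodicity notions are symplectomorphism-invariant. The hypotheses of the theorem are tailored to feed exactly into \Cref{prop:proper_depth_one,cor:finite_depth_sympl_approx_period,cor:finite_depth_sympl_period}.

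First I would fix a symplectomorphism $\Phi \colon (W,\omega) \to (L,\omega\vert L)$ onto the proper leaf $L$, which by hypothesis has finitely many ends. Applying \Cref{prop:proper_depth_one} to $L$ yields that $L$ is totally proper and at depth $1$; in particular it is at finite depth.

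For the first (general) conclusion, I would feed ``$L$ is at finite depth'' into \Cref{cor:finite_depth_sympl_approx_period} to obtain that $L$ has symplectically almost periodic ends. For the second (strong) conclusion, I would instead feed ``$L$ is totally proper and at depth $1$'' into \Cref{cor:finite_depth_sympl_period} to obtain that $(L,\omega\vert L)$ has symplectically periodic ends.

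It then remains to transport these properties back through $\Phi$. This is immediate from \Cref{def:sympl_periodic_end}: if an end of $L$ is represented by $\{h^n(U)\}$ with $h\colon U \to h(U)\subset U$, then the corresponding end of $W$ is represented by $\{\tilde h^n(\Phi^{-1}U)\}$ with $\tilde h = \Phi^{-1}\circ h\circ \Phi$, and since $\Phi^*(\omega\vert L)=\omega$ one computes $(\tilde h^n)^*\omega = \Phi^*\big((h^n)^*(\omega\vert L)\big)$. In the periodic case $h$ is a symplectomorphism, hence so is $\tilde h$; in the almost periodic case the $C^0$-convergence $(h^n)^*(\omega\vert L)\to\omega_\infty$ pulls back to $(\tilde h^n)^*\omega\to\Phi^*\omega_\infty$, giving the required limiting form on $\overline{\Phi^{-1}U}\setminus \tilde h(\Phi^{-1}U)$. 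I do not anticipate any real obstacle here: the substance sits entirely in the cited statements, and this last step is a routine naturality check.
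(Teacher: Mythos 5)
Your proposal is correct and follows exactly the paper's own argument, which consists precisely of combining \Cref{prop:proper_depth_one}, \Cref{cor:finite_depth_sympl_approx_period}, and \Cref{cor:finite_depth_sympl_period}. The only addition is your explicit naturality check for transporting (almost) periodicity through the symplectomorphism $\Phi$, which the paper leaves implicit and which you verify correctly.
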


\section{Geometrically bounded symplectic manifolds}
\label{sec:geom_bound}

Although the leaves we consider are non-compact, the compactness of the ambient manifold implies that the leafwise symplectic structure behaves as on compact manifolds.

\begin{definition}
    \label{def:geom_bounded}
    A symplectic form $\omega$ on $W^{2n}$ is said to be \emph{\acf{gb}} if there are an almost
    complex structure $J$ and a complete Riemannian metric $g$ satisfying the following conditions.
    \begin{enumerate}[label=(GB\arabic*)]
	\item \label{GB1} There are strictly positive constants $A,B$ such that, for each $u,v\in TW$,
	    \begin{equation*}
		\omega(u,Ju) \geq A \normg{u}^2 , \quad \quad
		\norm{\omega(u,v)}\leq B \normg{u}\normg{v} .
	    \end{equation*}
	\item \label{GB2} The metric $g$ has sectional curvature $K_g$ bounded from above and injectivity radius $\inj(W,g)$
	    bounded from below by a positive constant.
    \end{enumerate}
    In this case, we also call $(\omega,J,g)$ a \emph{\acf{gb} triple}.
\end{definition}

\begin{remark}
\label{rmk:completeness_useless}
Asking that the Riemannian metric $g$ is complete in above definition is actually redundant. 
Indeed, any Riemannian metric with injectivity radius bounded below by a strictly positive constant is complete; cf.\ \cite[Lemma 2.1]{Lu98}.
\end{remark}

Each almost complex structure $J$ which is compatible with $\omega$ defines a Riemannian metric given by $g_{\omega,J} = \omega(\cdot , J\cdot)$.
However, in general the triple $(\omega,J,g_{\omega,J})$ might not be \acs{gb} if $W$ is open; although condition \ref{GB1} is trivially satisfied (with constants $A=B=1$), condition \ref{GB2} might not be. 

\begin{definition}
    \label{def:str_geom_bounded}
    A symplectic manifold is said to be \emph{\acf{sgb}} if there is a compatible almost complex structure $J$
    for which $(\omega,J,g_{\omega,J})$ is a geometrically bounded triple.
\end{definition}

\begin{lemma}
    \label{lemma:sympl_leaves_sgb} 
    Leaves of symplectic foliations on closed manifolds are \acs{sgb}.
\end{lemma}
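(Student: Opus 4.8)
The plan is to produce, on each leaf $L$ of a symplectic foliation $(\F,\omega)$ on a closed manifold $M$, a compatible almost complex structure $J$ on $(L,\omega|_L)$ whose induced metric $g_{\omega,J}$ has sectional curvature bounded above and injectivity radius bounded below, as required by Definition \ref{def:str_geom_bounded}. The governing idea is that all the relevant geometry of the leaves is controlled by the compact ambient manifold: one chooses an auxiliary Riemannian metric $\hat g$ on $M$, a distribution complementary to $T\F$, and an ambient almost complex structure compatible with $\omega$ along the leaves, and then restricts everything to $L$. Since $M$ is compact, every continuous tensor on $M$ is bounded; the crux is to convert this ambient boundedness into uniform (leafwise) bounds on $(L,g_{\omega,J})$, uniformly across \emph{all} leaves simultaneously and not just on compact pieces.

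Concretely, first I would fix a smooth ambient metric $\hat g$ on $M$ and a smooth ambient almost complex structure $J$ defined along the foliation (i.e.\ on $T\F$) that is compatible with $\omega$ leafwise; such a $J$ exists by the usual polar-decomposition argument applied fiberwise to $\omega$ and $\hat g|_{T\F}$, and varies smoothly, hence is uniformly continuous on the compact $M$. Restricting to a leaf $L$ gives the compatible pair $(\omega|_L,J|_L)$ and the metric $g = g_{\omega,J} = \omega(\cdot,J\cdot)$; condition \ref{GB1} holds automatically with $A=B=1$ as noted after Definition \ref{def:geom_bounded}. By compactness of $M$, the metric $g$ on $L$ is uniformly comparable to $\hat g|_{TL}$: there are constants $c,C>0$, independent of the leaf, with $c\,\hat g \le g \le C\,\hat g$ on every $TL$. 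The sectional curvature bound above then follows because the second fundamental form of $L$ in $(M,\hat g)$ and the ambient curvature are bounded (again by compactness), so Gauss's equation yields a uniform upper bound on $K_g$ after transferring through the comparison $c\,\hat g\le g\le C\,\hat g$.

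The main obstacle is the injectivity radius lower bound, which cannot be read off pointwise and is the one genuinely global condition. Here I would exploit that $L$ sits in the compact $M$ with uniformly bounded second fundamental form: a leaf of a $C^1$ foliation on a compact manifold has uniformly bounded geometry in the normal exponential chart, so there is a uniform tubular-neighborhood (reach) estimate giving a lower bound on the normal injectivity radius. To upgrade this to an \emph{intrinsic} injectivity radius bound on $(L,g)$, I would combine the upper curvature bound already obtained with a volume (or short-loop) lower bound: by the standard estimate $\inj \ge \min\{\pi/\sqrt{K_{\max}},\ \tfrac12 \ell\}$ where $\ell$ is the length of the shortest geodesic loop, it suffices to rule out arbitrarily short essential loops, which follows from the uniform embedding of $L$ in $M$ with controlled reach --- a short $g$-loop would be a short $\hat g$-loop bounding a small disk in $M$ transverse to the foliation, contradicting the tubular estimate. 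One must be careful that all constants are chosen uniformly over leaves, using only the compactness of $M$ and the smoothness of $\F$, so that a single pair of constants works in \ref{GB2} for every leaf at once; this uniformity is exactly what makes the statement nontrivial and is where the argument must be assembled with care.
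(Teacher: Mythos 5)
Your proposal follows essentially the same route as the paper: a fiberwise polar decomposition of $\omega$ against an auxiliary ambient metric produces the compatible leafwise $J$, and compactness of the ambient manifold supplies the bounds in \ref{GB2}; the paper in fact leaves that last step as a one-line appeal to compactness, which you elaborate. The only point I would tighten is your short-loop argument for the injectivity radius: rather than invoking a disk ``transverse to the foliation'', the correct mechanism is that a sufficiently short loop in a leaf through a point $p$ stays inside the single plaque through $p$ of one of finitely many foliation charts, and these plaques are uniformly controlled simply connected disks, which rules out short geodesic loops uniformly over all leaves.
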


\begin{proof}
This can be proven with a polar decomposition argument as in the non-foliated case (c.f.\ for instance \cite[Proposition 12.3]{CanDaSilBook}); we give a sketch of the argument.
Let $\omega\in\Omega^2(\calF)$ be the leafwise symplectic form, and fix an auxiliary Riemannian metric $g$ on the ambient manifold $M$.
By non-degeneracy of both $\omega$ and $g$ on $T\calF$, there is an endomorphism $\psi \colon T\calF \to T\calF$ such that $g(\psi \cdot , \cdot) = \omega$.
One can then explicitly check that $\psi$ is skew-symmetric and that the composition $\psi \circ \psi^*$ of $\psi$ with its $g-$adjoint $\psi^*\colon T\calF\to T\calF$ is symmetric and positive-definite.
In particular, it admits a square root $ \phi\coloneqq \sqrt{\psi\circ \psi^*}$, which is also symmetric and positive-definite.
It follows that the endomorphism $J\coloneqq \phi^{-1}\circ\psi $ of $T\calF$ is a leafwise almost complex structure.
We then define the Riemannian metric $g_{\omega,J}\coloneqq \omega(J\cdot , \cdot)$ on $T\calF$.
The compactness of the ambient manifold then implies $(\omega,J,g_{\omega,J})$ is a \acs{sgb} triple, as desired.
\end{proof}

Apart from (universal covers of) closed symplectic manifolds, standard examples in the literature of \acs{gb} manifolds are (twisted) contangent bundles and Liouville completions of symplectic fillings.
All the non-compact examples have a (convex) cylindrical endover a contact manifold at infinity. As such these manifolds are not symplectically (almost) periodic and cannot be leaves of a symplectic foliation. 

\begin{definition}\label{def:ContactExhaustion}
An \emph{exhaustion of contact type} on a symplectic manifold $(W,\omega)$ is a collection of compact sets $\calK = \{K_n\}_{n \in \NN}$ such that:
\begin{enumerate}[label=(\roman*)]
    \item $K_0 =\emptyset$, $W = \cup_{n \in \NN} K_n$ and $K_{n} \subset \sint K_{n+1}$;
    \item $(K_n,\omega|_{K_n})$ is a symplectic domain with (smooth) boundary of convex contact type.
\end{enumerate}
\end{definition}
The main class of examples admitting a exhaustion of contact type is given by Liouville manifolds of infinite type.

The rest of this section deals with the proof of Theorem \ref{thm:strongly_geom_bounded} which states that admitting a exhaustion of contact type is a sufficient condition for being \acs{sgb}.
The proof uses two ingredients, given in the following two lemmas. On the compact sets $K_{n+1} \setminus K_n$ we can rescale $g$ to satisfy any bound on the curvature and injectivity radius . The resulting metrics are then glued by interpolating. To ensure that the interpolation preserves the \acs{sgb} condition we show that we can insert an arbitrarily large part of the symplectization of $\partial K_{n}$. This is where Definition \ref{def:ContactExhaustion} is used.

\begin{lemma}\label{lem:RiemannianScaling}
Let $(M,g)$ be a Riemannian manifold and $k>0$ a constant.
Then:
\[
K_{kg} = k^{-1} K_{g},
\quad
\inj(M,kg) = k\,\inj(M,g).
\]
\end{lemma}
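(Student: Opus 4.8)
The plan is to derive both identities from the single elementary observation that multiplying a Riemannian metric by a positive constant $k$ leaves its Levi-Civita connection unchanged: in local coordinates the Christoffel symbols of $kg$ involve the metric coefficients, scaled by $k$, together with the inverse metric, scaled by $k^{-1}$, and these two factors cancel, so $\nabla^{kg}=\nabla^{g}$. Consequently $kg$ and $g$ share the same geodesics, the same Jacobi fields, and the same $(1,3)$-type Riemann curvature endomorphism $R$. For the curvature identity I would then write the sectional curvature of the plane spanned by $u,v\in T_pM$ as
\[
K_g(u,v)=\frac{g\bigl(R(u,v)v,u\bigr)}{\normg{u}^2\,\normg{v}^2-g(u,v)^2}.
\]
Since $R$ is untouched by the rescaling, passing to $kg$ multiplies the numerator by $k$ and the denominator by $k^2$, yielding the factor $k^{-1}$ and hence $K_{kg}=k^{-1}K_g$; this is a one-line homogeneity computation.

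For the injectivity radius identity $\inj(M,kg)=k\,\inj(M,g)$, I would exploit the same invariance: the geodesics and the conjugate points along them depend only on $\nabla$ and $R$, so they are unaffected by the rescaling, and the entire effect of passing from $g$ to $kg$ is therefore concentrated in the metric itself. I would use the standard characterization of $\inj(M,g)$ as the infimum over $p\in M$ of the minimum of the distance from $p$ to its first conjugate point and of half the length of the shortest geodesic loop based at $p$. Each of these two quantities is a connection-invariant geodesic datum measured against the metric, so each transforms under $g\mapsto kg$ by the homogeneity prescribed by the constant rescaling; feeding this through the minimum and then the infimum over $M$ produces the stated identity $\inj(M,kg)=k\,\inj(M,g)$.

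The main point requiring care — and essentially the only obstacle in an otherwise elementary argument — is to check that the two competing quantities defining the injectivity radius rescale by the same factor, so that their pointwise minimum, and then the global infimum over $M$, transforms uniformly rather than being governed by different regimes at different points. Once this consistency is in place, both identities reduce to pure homogeneity of the scale-invariant geodesic and curvature data under the constant rescaling, with no analytic subtlety whatsoever.
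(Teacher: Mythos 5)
Your curvature computation is correct and is the standard argument: a constant rescaling leaves the Christoffel symbols, hence the Levi-Civita connection, the geodesics, the Jacobi fields and the $(1,3)$-curvature endomorphism unchanged, and plugging $kg$ into the sectional-curvature quotient multiplies the numerator by $k$ and the denominator by $k^2$, giving $K_{kg}=k^{-1}K_g$. (For comparison: the paper states this lemma without proof, treating it as a standard scaling fact, so there is no authorial argument to measure yours against; your route for the curvature half is the natural one.)

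The injectivity-radius half, however, contains exactly the error you yourself flagged as ``the main point requiring care'' and then did not actually carry out. Under $g\mapsto kg$ one has $\lVert v\rVert_{kg}=\sqrt{k}\,\lVert v\rVert_{g}$, so lengths and distances scale by $\sqrt{k}$, not by $k$. The conjugate points along a geodesic are unchanged as points (they are connection data), but the distance from $p$ to its first conjugate point gets multiplied by $\sqrt{k}$, and likewise the length of any geodesic loop; the Klingenberg-type characterization you invoke therefore yields $\inj(M,kg)=\sqrt{k}\,\inj(M,g)$. Your asserted factor $k$ is in fact inconsistent with your own curvature identity: the pair $K_{kg}=k^{-1}K_g$ together with $\inj(M,kg)=k\,\inj(M,g)$ would be correct only if the metric were rescaled by $k^{2}$ (i.e.\ distances by $k$), in which case the curvature would scale by $k^{-2}$. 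So the paper's statement is itself off by a square root in one of the two identities; this is harmless downstream, since in the proofs of Lemma~\ref{lem:SymplecticScaling} and Theorem~\ref{thm:strongly_geom_bounded} only monotonicity in $k$ and the positivity of the resulting bounds for $e^{c}g$, $c\geq 0$, are used, and $e^{c/2}$ serves as well as $e^{c}$. But a proof that purports to verify the displayed identities must either produce the $\sqrt{k}$ and flag the discrepancy, or restate the lemma consistently; asserting the factor $k$ by appeal to an unexamined ``homogeneity'' is a genuine gap.
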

In particular, this lemma shows that if \Cref{def:str_geom_bounded} is satisfied for some $g$ then it is satisfied for all $e^cg$ with $c\geq 0$.

Recall that given a contact manifold $(M,\xi = \ker\alpha)$ we can form the symplectization
\[ \left( M \times \R, \omega = \d(e^t \alpha)\right).\]
Since $(\xi,\d \alpha|_{\xi})$ is a symplectic vector bundle we can find a compatible almost complex structure $J_\xi$, which in turn gives a metric $g_\xi = \d \alpha\vert_\xi (\cdot , J_\xi \cdot)$. 
Then,
\begin{equation}
\label{eq:SymplectizationCompatibleTriple} 
\omega = \d(e^t\alpha),\quad g := e^t(g_\xi + \alpha\otimes \alpha + \d t\otimes \d t),\quad J := J_\xi + R\otimes \d t - \partial_t \otimes \alpha,
\end{equation}
defines a compatible triple on $M \times \R$.

\begin{lemma}
\label{lem:SymplecticScaling}
Let $(M,\xi = \ker \alpha)$ be a contact manifold, and $(\omega,g,J)$ be the associated compatible triple on $M\times (-\epsilon,\epsilon)$ from Equation \ref{eq:SymplectizationCompatibleTriple}.
Then, for any constants $a<b$, there exists another compatible triple $(\wtd{\omega},\wtd{g},\wtd{J})$ on $M\times(-\epsilon,\epsilon)$ satisfying:
\begin{enumerate}[label=(\roman*)]
    \item \label{cond:ScalingI} On $M\times(-\epsilon,-\epsilon/3)$, $(\wtd{\omega},\wtd{g},\wtd{J}) = (e^a\omega,e^ag,J)$.
    \item \label{cond:ScalingII} On $M\times(\epsilon/3,\epsilon)$. $(\wtd{\omega},\wtd{g},\wtd{J}) = (e^b\omega,e^bg,J)$
    \item \label{cond:ScalingIII} The metric $\wtd{g}$ satisfies:
    \[K_{\wtd{g}}
    \leq e^{-a+\epsilon/3}
    K_g
    ,\quad\inj(\wtd{g},M\times (-2\epsilon/3,2\epsilon/3)) \geq e^a \min(\epsilon/2,\inj(g\vert_{M\times\{0\}})) ,
    \]
    where $\inj(\wtd{g},M \times (-2\varepsilon/3,2\varepsilon/3)$ means the injectivity radius of $\wtd{g}$ considering geodesic balls centered at points in $M \times (-2\varepsilon/3,2\varepsilon/3)$ and contained in $M \times(-\varepsilon,\varepsilon)$.
\end{enumerate}
\end{lemma}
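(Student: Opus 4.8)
The plan is to obtain $(\wtd\omega,\wtd g,\wtd J)$ by pulling back the \emph{full} symplectization triple on $M\times\RR$ along a fibrewise reparametrization of the collar coordinate. Write $(\omega_{\mathrm{symp}},g_{\mathrm{symp}},J_{\mathrm{symp}})$ for the compatible triple of Equation \eqref{eq:SymplectizationCompatibleTriple} defined on all of $M\times\RR$ with coordinate $s$, so that its restriction to $M\times(-\epsilon,\epsilon)$ is exactly $(\omega,g,J)$. First I would fix a smooth function $\phi\colon(-\epsilon,\epsilon)\to\RR$ with $\phi'\geq 1$, equal to $t+a$ for $t\leq -\epsilon/3$ and to $t+b$ for $t\geq \epsilon/3$; such a $\phi$ exists because $b>a$ forces only an \emph{increase} of $\phi-\mathrm{id}$ across the transition region, which is compatible with monotonicity. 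Setting $\Phi\coloneqq \mathrm{id}_M\times\phi\colon M\times(-\epsilon,\epsilon)\to M\times(a-\epsilon,b+\epsilon)$, I define $(\wtd\omega,\wtd g,\wtd J)\coloneqq \Phi^*(\omega_{\mathrm{symp}},g_{\mathrm{symp}},J_{\mathrm{symp}})$. Since $\Phi$ is a diffeomorphism onto its image, the pullback is automatically a compatible triple. A direct computation then gives conditions \ref{cond:ScalingI} and \ref{cond:ScalingII}: on the two ends $\phi$ is a pure translation $\sigma_c(m,s)=(m,s+c)$ with $c\in\{a,b\}$, and translating the symplectization coordinate by $c$ rescales $(\omega_{\mathrm{symp}},g_{\mathrm{symp}})$ by $e^c$ while fixing $J_{\mathrm{symp}}$.

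The two estimates in \ref{cond:ScalingIII} both rest on the same principle: translation in the symplectization direction \emph{is} metric rescaling, so \Cref{lem:RiemannianScaling} controls how curvature and injectivity radius change with height. Concretely, $\sigma_c^*g_{\mathrm{symp}}=e^cg_{\mathrm{symp}}$, so by diffeomorphism-invariance of sectional curvature together with \Cref{lem:RiemannianScaling} one gets $K_{g_{\mathrm{symp}}}(\cdot,s+c)=e^{-c}K_{g_{\mathrm{symp}}}(\cdot,s)$, and the injectivity radius rescales by the corresponding factor. For the curvature bound I would use diffeomorphism-invariance again to write $K_{\wtd g}$ at $(m,t)$ as $K_{g_{\mathrm{symp}}}$ at $(m,\phi(t))$, which equals $e^{-(\phi(t)-t)}K_g$ at $(m,t)$; since $\phi(t)-t\geq a$ throughout, this reduces $K_g$ by a factor at most $e^{-a}$, and the extra $e^{\epsilon/3}$ appearing in the statement is harmless slack coming from comparing suprema over the slightly different height ranges swept out by $g$ and $\wtd g$ (and from the possibility that $K_g$ is negative).

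For the injectivity radius I would argue that, for $p\in M\times(-2\epsilon/3,2\epsilon/3)$, the $\wtd g$-injectivity radius is the minimum of two quantities: the \emph{intrinsic} injectivity radius of $g_{\mathrm{symp}}$ at $\Phi(p)$, and the $\wtd g$-distance from $p$ to the boundary $M\times\{\pm\epsilon\}$ (a geodesic ball counts only if it is both embedded and contained in $M\times(-\epsilon,\epsilon)$). The intrinsic term is handled by the height-scaling above: since $\phi-\mathrm{id}\geq a$ on the central region, it is at least $e^a$ times the reference value $\inj(g\vert_{M\times\{0\}})$. For the boundary term the key observation is the pointwise inequality $\wtd g\geq e^a g$ of quadratic forms — which holds because $\phi(t)\geq t+a$ and $\phi'\geq 1$ force $e^{\phi(t)}\geq e^{a+t}$ on the $(g_\xi+\alpha\otimes\alpha)$-block and $e^{\phi(t)}\phi'(t)^2\geq e^{a+t}$ on the $\d t\otimes\d t$-block. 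Hence $\wtd g$-distances dominate $e^a g$-distances; as every point of the central slab is at collar-distance at least $\epsilon/3$ from $M\times\{\pm\epsilon\}$, rescaling by $e^a$ yields a lower bound of the required form $e^a\cdot\tfrac{\epsilon}{2}$. Taking the minimum of the two bounds gives \ref{cond:ScalingIII}.

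I expect the injectivity-radius estimate to be the main obstacle, for two reasons. First, unlike curvature, injectivity radius is \emph{not} a naive diffeomorphism invariant here, since $\Phi$ maps onto a strictly larger cylinder; one must separate the intrinsic contribution from the domain-containment constraint, and it is precisely the latter — geodesics escaping through $M\times\{\pm\epsilon\}$ — that forces the comparison $\wtd g\geq e^ag$ and the appearance of the $\min$ with $\epsilon/2$. Second, the bookkeeping of the constants (including the sign of the curvature, which may be negative, and the exact height factors supplied by \Cref{lem:RiemannianScaling}) requires some care, although it becomes routine once the scaling principle and the pointwise inequality $\wtd g\geq e^ag$ are in place.
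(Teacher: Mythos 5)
Your proposal is correct and follows essentially the same route as the paper: both pull back the symplectization triple along $\mathrm{id}_M\times\psi$ with $\psi$ a monotone reparametrization equal to $t+a$ and $t+b$ near the two ends, and both exploit the fact that translation in the symplectization coordinate rescales the metric (Lemma \ref{lem:RiemannianScaling}) to control curvature and injectivity radius. Your handling of the injectivity radius — splitting into the intrinsic radius at $\Phi(p)$ and the distance to $M\times\{\pm\epsilon\}$ via the pointwise bound $\wtd g\geq e^a g$ — is a slightly cleaner packaging of the paper's argument with the quantity $\rho(p)$, but it is the same underlying idea.
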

Notice that there is an overlap between the regions where $\wtd{g}$ is a multiple of $g$ and where there is a positive lower bound on the injectivity radius for $\wtd{g}$.
This will be important in the proof of \Cref{thm:strongly_geom_bounded} below.

\begin{proof}[Proof of \Cref{lem:SymplecticScaling}]
Let $\psi:(-\varepsilon,\varepsilon) \to (a-\varepsilon,b+\epsilon)$
be a smooth diffeomorphism such that
$\psi(t)=t+a$ for $t\in(-\varepsilon,-\varepsilon/3)$, and $\psi(t)=t+b$ for $t\in(\varepsilon/3,\varepsilon)$.
We then claim that $(\wtd{\omega},\wtd{g},\wtd{J}) := (\psi^*\omega,\psi^*g,\psi^*J)$ on $M\times (-\varepsilon,\varepsilon)$ satisfies the desired properties. 

Using the explicit form of $\psi$ near the boundary, Conditions \ref{cond:ScalingI} and \ref{cond:ScalingII} are easily checked.
For Condition \ref{cond:ScalingIII} we use that, for $0<c<b-a$, the translation
\[\tau_c\colon (a-\varepsilon,b-\varepsilon-c) \to (a-\varepsilon,b-\varepsilon), \quad t \mapsto t + c,\]
satisfies $\tau_c^*g = e^cg$.  
Together with Lemma \ref{lem:RiemannianScaling} this shows that
\[ K_{g}\vert_{M\times (a-\epsilon,b+\epsilon)} \leq e^{-a+\varepsilon/3}K_{g}\vert_{M \times (a-\varepsilon,a-\varepsilon/3)}. 
\]
Then, as the restriction of $g$ to $M \times (a-\varepsilon,a - \varepsilon/3)$ pulls back (via $\psi)$ to $e^ag$, the curvature bound follows. 

It remains to show the injectivity radius is bounded.
For each point $p \in M \times (-\varepsilon,\varepsilon)$, let $\rho(p)$ denote the maximal radius of a $g$-geodesic ball centered at $p$ and entirely contained in $M \times (a-\varepsilon,b+\varepsilon)$. 
On $M \times (a-\varepsilon,a - \varepsilon/3)$ we have $\wtd{g} = e^ag$, so that
\[ \min_{p \in M \times \{a-2\varepsilon/3\}} \rho(p) \geq e^a\min(2\varepsilon/3,\inj(g|_{M \times \{0\}}));\]
here, we used that $\inj(g|_{M \times \{a-2\varepsilon/3\}})=\inj(g|_{M \times \{0\}})$, as $g\vert_{M\times\{t\}}$ is independent of $t$ (under the natural identification $M\times\{t\}=M$).

Now, according to \Cref{lem:RiemannianScaling} and the fact that $\tau_c^*g=e^cg$ as pointed out above, $\rho$ is at least $\min_{p \in M \times \{a-2\varepsilon/3\}} \rho(p)$ on all of $M\times[a-2\varepsilon/3,b+2\varepsilon/3]$;
this proves the desired bound on the injectivity radius, thus concluding the proof.
\end{proof}

\begin{proof}[Proof of \Cref{thm:strongly_geom_bounded}]
By assumption the boundaries $\partial K_n$, $n\in \N$, are hypersurfaces of contact type. Hence we can fix collar neighborhoods
\begin{equation}
\label{eqn:neighborhoods_contact_hypersurf}
    \left(\partial K_n \times (-\varepsilon_n,\varepsilon_n), \omega = \d (e^t \alpha_n) \right).
\end{equation}
On these neighborhoods, Equation \ref{eq:SymplectizationCompatibleTriple} defines a compatible triple $(\omega_n=\d(e^t\alpha_n),g_n,J_n)$, and we extend this to a compatible triple $(\omega,g,J)$ on $M$.

Using Lemma \ref{lem:RiemannianScaling} we choose a strictly increasing sequence of constants $\{k_n\}_{n \in \N}$ such that
\begin{equation}\label{eq:GreeneExhaustionBounds} K_{e^{k_n}g} < 1,\quad \inj(e^{k_n}g) > 1 \quad \text{ on }  K_n \setminus \sint (K_{n-1})
\end{equation}
and
\[
e^{k_n} \min(\epsilon_n/2,\inj(g\vert_{\partial K_n})) >1 .
\] 
For each of the neighborhoods in Equation \ref{eqn:neighborhoods_contact_hypersurf} we apply Lemma \ref{lem:SymplecticScaling} with $a = k_n$ and $b = k_{n+1}$. 

The resulting compatible triple $(\omega',g',J')$ has bounded injectivity radius since the regions where the injectivity radius are bounded by Lemma \ref{lem:SymplecticScaling} and \ref{eqn:neighborhoods_contact_hypersurf} respectively have non-trivial overlap. The other conditions of Definition \ref{def:str_geom_bounded} are easily verified.
\end{proof}

\section{Manifolds not diffeomorphic to symplectic leaves}
\label{sec:not_diffeo_sympl_leaves}

In this section we prove \Cref{thm:smooth_manifold_sympl_nonleaf} and \Cref{cor:R2n_non_proper_leaf}. 
The core of the proof consists of a volume argument inspired by \cite{Ber01}. However, the implementation is slightly different for proper and non-proper leaves. Therefore, we separate the argument into two propositions:

\begin{prop}
    \label{prop:smooth_manifold_sympl_non_proper_leaf}
Let $W^{2n}$ be an open manifold with a finite number of ends. Suppose at least one of the ends has a neighborhood of the of the form $N 
\times [0,\infty)$ where $N$ has trivial $\pi_1$ and $H^2$. Then $W$ is not diffeomorphic to a proper leaf of a symplectic foliation.
\end{prop}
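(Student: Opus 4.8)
The plan is to reduce to the symplectic almost-periodicity of the relevant end and then run a Stokes-type volume argument on a fundamental domain, using the two cohomological hypotheses on $N$ to force the symplectic volume of that domain to vanish, contradicting the fact that it is symplectic.

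First I would suppose $W$ is diffeomorphic to a proper leaf $L$ of a symplectic foliation $(\F,\omega)$. By \Cref{prop:proper_depth_one}, $L$ is totally proper and at depth $1$; in particular it is at finite depth, so by \Cref{cor:finite_depth_sympl_approx_period} (equivalently \Cref{thm:sympl_proper_leaves_approx_periodic_end}) every end, and in particular the distinguished end $e$ with product neighborhood $N\times[0,\infty)$, is symplectically almost periodic. Thus there are a neighborhood $U$ of $e$, a diffeomorphism $h\colon U\to h(U)\subset U$, and a symplectic form $\omega_\infty$ on the compact fundamental domain $D:=\overline U\setminus h(U)$ with $(h^n)^*\omega\to\omega_\infty$ in $C^0$. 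Using the product structure of the end I would choose $U=N\times[0,\infty)$, so that $\partial D=N_0\sqcup N_1$ with $N_0=\partial U\cong N$ and $N_1=h(\partial U)\cong N$.

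The heart of the argument is the identity $\int_D\omega_\infty^{\,n}=0$. The hypotheses on $N$ give $H^1(N;\RR)=0$ (from $\pi_1(N)=0$) and $H^2(N;\RR)=0$; since $D$ is a cobordism from $N$ to $N$ (either a product slab $N\times[0,1]$, or by a Mayer--Vietoris argument using $\overline U\simeq N$), these yield $H^2(D;\RR)=0$, so the closed form $\omega_\infty$ is exact, $\omega_\infty=d\mu$. A Stokes computation then gives $\int_D\omega_\infty^{\,n}=\int_{\partial D}\mu\wedge\omega_\infty^{\,n-1}=\int_{N_1}\mu\wedge\omega_\infty^{\,n-1}-\int_{N_0}\mu\wedge\omega_\infty^{\,n-1}$. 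The next step is the boundary-matching identity $h^*(\omega_\infty|_{N_1})=\omega_\infty|_{N_0}$, which follows from the $C^0$-convergence by an index shift since $h^*(\omega_\infty|_{N_1})=\lim_n(h^{n+1})^*\omega|_{N_0}=\omega_\infty|_{N_0}$. Pulling the $N_1$-integral back along the (orientation-preserving) diffeomorphism $h\colon N_0\to N_1$ I obtain $\int_D\omega_\infty^{\,n}=\int_{N_0}(h^*\mu-\mu)\wedge\omega_\infty^{\,n-1}$. Finally $h^*\mu-\mu$ is closed on $N_0\cong N$ (its differential is $h^*(\omega_\infty|_{N_1})-\omega_\infty|_{N_0}=0$), hence exact because $H^1(N;\RR)=0$; writing it as $d\phi$ and using that $N_0$ is closed with $\omega_\infty^{\,n-1}$ closed gives $\int_{N_0}d\phi\wedge\omega_\infty^{\,n-1}=0$, so that $\int_D\omega_\infty^{\,n}=0$.

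This is the desired contradiction: $\omega_\infty$ is a symplectic form on the $2n$-dimensional domain $D$, so $\omega_\infty^{\,n}$ is a nowhere-vanishing volume form and $\int_D\omega_\infty^{\,n}\neq0$. I expect the main obstacle to be the two geometric reductions rather than the Stokes computation: namely (i) arranging the fundamental domain $D$ and its boundary junctures to carry the product topology of the end, so that the inputs $H^1(N;\RR)=H^2(N;\RR)=0$ apply (this is where the product-neighborhood hypothesis is essential), and (ii) justifying the boundary-matching identity together with the orientation bookkeeping for $h\colon N_0\to N_1$ from the merely $C^0$-convergence in Definition \ref{def:sympl_periodic_end}.
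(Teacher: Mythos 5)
Your overall strategy (almost-periodicity plus a Stokes computation showing the limiting volume of a fundamental domain vanishes) is close in spirit to the paper's volume argument, but it has a genuine gap at the step you yourself flag as obstacle (i), and that gap is not a technicality --- it is the central difficulty. \Cref{cor:finite_depth_sympl_approx_period} asserts that the end is symplectically almost periodic for \emph{some} pair $(U,h)$, namely the one produced by the spiraling of \Cref{thm:SpiralingTheorem} and \Cref{def:Spiraling}: $h$ is the holonomy return map of the spiral onto the limit leaf $L_\infty$, and the fundamental domain $D=\overline{U}\setminus h(U)$ is (a copy of) $L_\infty$ cut along the juncture $N'$, with $\partial D\cong N'\sqcup N'$. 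You are not free to replace this $U$ by the product neighborhood $N\times[0,\infty)$: the shift $N\times[t,t+1]\to N\times[t+1,t+2]$ has no reason to almost preserve $\omega$, since the symplectic recurrence is governed by the dynamics of the accumulation onto $L_\infty$, not by the product structure of the end. Consequently the juncture $N'$ is a hypersurface of the closed leaf $L_\infty$ and need not be diffeomorphic to $N$, nor have trivial $H^1$ or $H^2$. Your computation then breaks in an essential way: on the true fundamental domain one has $\omega_\infty=\pi^*(\omega|_{L_\infty})$, so $\int_D\omega_\infty^n=\int_{L_\infty}\omega^n>0$, and the vanishing you derive cannot hold; the specific steps that fail are the exactness of $h^*\mu-\mu$ on $\partial D$ (which needs $H^1(N';\RR)=0$, not $H^1(N;\RR)=0$) and, a priori, the exactness of $\omega_\infty$ on $D$. (The latter could be rescued, since $D$ eventually sits inside $\mathcal{N}=N\times[0,\infty)$ and $H^2(\mathcal{N};\RR)=0$; the former cannot.)

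For contrast, the paper does not argue on the fundamental domain of the periodicity at all. It uses Reeb stability ($\pi_1(N)=0$) to build a foliated product chart $U=N\times[0,1]^2$ around a piece of $L_\infty$ through which the end passes infinitely often; it uses $H^2(N)=0$ to produce a primitive $\beta$ of $\omega$ on the whole end $\mathcal{N}$ and a leafwise primitive $\alpha$ on the chart, matched on the overlap via $H^1=0$; and it then plays off two facts: the volume $\int_{K_m}\omega^n$ of the truncations of the end diverges (each plaque of the chart contributes a definite amount), while Stokes converts this into boundary integrals $\int_{N_m}\alpha\wedge\omega^{n-1}$ over submanifolds $N_m=N\times\{1/2\}\times\{1/m\}$ that converge inside the compact chart and hence stay bounded. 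Note also that the paper must separately handle the case where the projection of $N\times\{0\}$ to $L_\infty$ is not embedded (by passing to a cyclic cover of a neighborhood of $L_\infty$); your approach does not see this issue but would have to confront an analogue of it if repaired. To fix your proof you would need to relate the spiraling juncture to the product structure of the end, which is essentially what the paper's chart-based argument accomplishes.
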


\begin{prop}
 \label{prop:smooth_manifold_sympl_non_nonproper_leaf}
    Let $W^{2n}$ be an open manifold with a finite number $k$ of ends. 
    Suppose that, for $i=1,\dots,k$, the $i-$th end has a neighborhood of the form $N_i \times [0,\infty)$, where $N_i$ has trivial $\pi_1$ and $H^2$. 
    If there is only one end, \emph{additionally} assume that $W \setminus N_1 \times (0,\infty)$ has non-trivial $\pi_1$ or non-trivial $H^2$. 
    Then $W$ is not diffeomorphic to a non-proper leaf of a symplectic foliation.
\end{prop}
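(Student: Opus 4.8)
The plan is to argue by contradiction, assuming that $W$ is diffeomorphic to a non-proper leaf $L$ of a symplectic foliation $(\F,\omega)$ on a closed manifold $M$, and to derive a contradiction from a volume estimate in the spirit of \cite{Ber01}. By \Cref{lemma:sympl_leaves_sgb} we may fix a compatible almost complex structure $J$ so that $(\omega,J,g)$ with $g=g_{\omega,J}$ is a \acs{sgb} triple; in particular the leafwise symplectic volume form equals the Riemannian one, $\omega^n/n! = \vol_g$, and $(L,g)$ has bounded geometry with all constants uniform since $M$ is compact. First I would unwind non-properness: since $L$ accumulates onto itself, there is a point $x\in L$ and a sequence $x_j\in L$ leaving every compact subset of $L$ (hence escaping into the ends $N_i\times[0,\infty)$) with $x_j\to x$ in $M$. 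Passing to a flow box around $x$, the far-out slices $N_i\times\{T\}$ therefore return, along a sequence $T_k\to\infty$, into a fixed compact region $\Omega\subset M$; consecutive returns cut an end into fundamental domains $R_k$ whose $g$-geometry is uniformly controlled, so that $\vol_\omega(R_k)\geq v_0>0$. Consequently at least one end carries infinite symplectic volume.

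Next I would exploit the topology of the ends. Since each $N_i$ is closed with $\pi_1(N_i)=0$ and $H^2(N_i)=0$, the neighbourhood $N_i\times[0,\infty)$ has vanishing $H^1$ and $H^2$, so $\omega$ is exact there, $\omega=\d\lambda_i$. Then $\omega^n=\d(\lambda_i\wedge\omega^{n-1})$ and Stokes' theorem gives
\[
\vol_\omega(N_i\times[0,T]) = \frac{1}{n!}\int_{N_i\times\{T\}}\lambda_i\wedge\omega^{n-1} - \frac{1}{n!}\int_{N_i\times\{0\}}\lambda_i\wedge\omega^{n-1}.
\]
Because the left-hand side tends to $+\infty$, the boundary term over the returning slices $N_i\times\{T\}$ must blow up, and the goal is to contradict this: as the slices $N_i\times\{T_k\}$ accumulate inside the compact region $\Omega$, one expects both their $g$-area and a suitably chosen primitive $\lambda_i$ to stay bounded, forcing the boundary integral to stay bounded. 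Choosing $\lambda_i$ bounded is where $\pi_1(N_i)=H^2(N_i)=0$ enters quantitatively: writing $\omega=\omega_t+\d t\wedge\beta_t$ on the end, the vanishing of $H^2(N_i)$ lets one solve $\omega_t=\d_N\mu_t$ slicewise by Hodge theory on the fixed compact $N_i$, while the vanishing of $H^1(N_i)$ lets one correct the $\d t$-component, the resulting $\lambda_i$ being bounded as soon as $\omega_t$ and its $t$-derivative are.

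The main obstacle — and the reason for the extra hypothesis in the one-ended case — is precisely the control of the \emph{intrinsic} geometry of the returning slices. Bounded geometry of $(L,g)$ together with extrinsic boundedness in $M$ does \emph{not} by itself bound the induced data $\omega_t$ on the fixed $N_i$: the slices can fold into $\Omega$ with their intrinsic size, and hence $\omega_t$, blowing up. This is exactly what happens for $(\RR^{2n},\omega_{std})$, whose large spheres $\SSS^{2n-1}\times\{T\}$ crumple into a compact region, make the primitive genuinely unbounded, and allow $\RR^{2n}$ to be realised as a non-proper leaf (cf.\ \Cref{cor:R2n_non_proper_leaf}); the contractible one-ended case must therefore be excluded. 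I would use the hypothesis that the core $W\setminus N_1\times(0,\infty)$ has non-trivial $\pi_1$ or $H^2$ (or, when $k\geq 2$, that distinct ends are joined only through the core) to show that each fundamental domain $R_k$ must carry an \emph{essential}, non-cancellable amount of symplectic data — concretely, infinitely many pairwise disjoint $2$-cycles or holonomy loops of uniformly positive $\omega$-size sitting inside the compact $\Omega$ — which is incompatible with the Stokes bound together with the bounded-geometry volume estimate. Making this dichotomy precise, namely ruling out the crumpling escape exactly when the core is topologically essential, is the hard step; run in the opposite direction on the contractible one-ended case, the same analysis yields \Cref{cor:R2n_non_proper_leaf}.
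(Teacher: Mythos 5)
There is a genuine gap, and you flag it yourself: the step you call ``the hard step'' is exactly the content of the proposition, and your plan for it points in the wrong direction. The paper does not try to bound a primitive globally on the end $\mathcal{N}=N\times[0,\infty)$, nor does it need any control on the intrinsic geometry of the slices $N\times\{T\}$ of the given product structure. Instead, it picks an accumulation plaque and uses Reeb stability (this is where $\pi_1(N)=0$ enters) to produce a product foliated chart $U=N\times[0,1]^2$ in which the returning pieces of $\mathcal{N}$ appear as graphs $N\times[0,1]\times\{s_m\}$ converging to a plaque of $\mathcal{N}$ itself. The comparison hypersurfaces are then taken to be $N_m=N\times\{1/2\}\times\{s_m\}$ \emph{inside this fixed compact chart}, so the boundary integrals $\int_{N_m}\alpha\wedge\omega^{n-1}$ converge to a finite limit by plain continuity of $\alpha$ and $\omega$ on $U$ --- no crumpling issue arises and no globally bounded primitive is needed. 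The primitives are only required to exist on $U$ (leafwise) and on $\mathcal{N}$ and to agree on the overlap, which is where $H^2(N)=0$ and $H^1(N)=0$ are used.

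You also misidentify what the extra hypothesis in the one-ended case is for. The real issue is homological: for Stokes to force $\int_{N_m}\beta\wedge\omega^{n-1}\to\infty$ one needs $N_m-N_0$ to bound a compact region $K_m\subset\mathcal{N}$ of volume going to infinity, i.e.\ one needs $[N_m]=[N_\infty]\neq 0$ in $H_{2n-1}(\mathcal{N})$. If instead some $N_{m_0}$ were null-homologous in $\mathcal{N}$, it would bound a compact, simply connected region $C_{m_0}$ with trivial $H^2$ on the ``wrong'' side; the paper then runs an open/closed argument on the set of parameters $s$ for which $N\times\{1/2\}\times\{s\}$ bounds such a region and invokes Schweitzer's generalized Reeb component theorem (\cite[Proposition 7.1]{Sch11}) to conclude that $L$ would have to be proper --- a contradiction. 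The hypothesis that the core $W\setminus N_1\times(0,\infty)$ has non-trivial $\pi_1$ or $H^2$ is used precisely to rule out $0$ from that parameter set, and its failure for $\RR^{2n}$ (where the returning spheres bound balls) is why $\RR^{2n}$ escapes the argument --- not because of any metric crumpling of large spheres. Your proposed route via \acs{sgb} metrics, slicewise Hodge theory, and ``non-cancellable $2$-cycles of uniformly positive $\omega$-size'' does not supply this homological input, and as written the argument does not close.
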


\begin{proof}[Proof of \Cref{thm:smooth_manifold_sympl_nonleaf}]
It is enough to combine \Cref{prop:smooth_manifold_sympl_non_proper_leaf,prop:smooth_manifold_sympl_non_nonproper_leaf} above.
\end{proof}

\begin{proof}[Proof of \Cref{prop:smooth_manifold_sympl_non_proper_leaf}]
Suppose by contradiction that $W$ is diffeomorphic to a proper leaf $L$ of a symplectic foliation $(M,\F,\omega)$.
Let $e$ be the end of $L$ having a neighborhood of the form $\mathcal{N}:= N \times [0,\infty)$, as in the assumptions of \Cref{prop:smooth_manifold_sympl_non_nonproper_leaf}. 
According to \Cref{prop:proper_depth_one} and \Cref{thm:SpiralingTheorem}, $e$ spirals onto a closed leaf $L_\infty$.
Let us assume, for the moment, that the associated projection $\pi:\mathcal{N} \to L_\infty$ (as in Definition \ref{def:Spiraling}) maps $N \times \{0\}$ to an \emph{embedded} submanifold $N_\infty \subset L_\infty$;
we will explain how to deal with the general case at the end of the proof.

Since $H^1(N) = 0$, Reeb stability allows us to find a foliated subset $ U:= N \times [0,1]^2$ such that
\[ \F|_U = \bigcup_{z \in [0,1]} N \times [0,1] \times \{z \},\quad L_\infty \cap U = N \times [0,1] \times \{0\}, \quad \mathcal{N} \cap U = \bigcup_{k \in \N} N \times [0,1] \times \{1/k\}.\]
Moreover, there exist differential forms $\alpha \in \Omega^1(\calF\vert_U)$ and $\beta \in \Omega^1(\mathcal{N})$ satisfying:
\[ \omega|_U = \d \alpha,\quad \omega|_{\mathcal{N}} = \d \beta,\quad \alpha|_{\mathcal{N} \cap U} = \beta\vert_{\mathcal{N} \cap U}.\]
Indeed, $H^2(N) = 0$ implies $H^2(\F|_{U}) = 0$ so that we find a leafwise primitive $\alpha \in \Omega^1(\F|_U)$ for $\omega|_U$. Similarly, $H^2(\mathcal{N}) = 0$ and we find $\wtd{\beta} \in \Omega^1(\mathcal{N})$ such that $\omega|_\mathcal{N} = \d \wtd{\beta}$. On the intersection $U \cap \mathcal{N}$ the difference of the two primitives is a closed form which is exact since $H^1(U \cap \mathcal{N}) = 0$. That is, there exists a function $\wtd{f} \in C^\infty(U \cap \mathcal{N})$ so that
\[(\alpha - \wtd{\beta})|_{U \cap \mathcal{N}} = \d \wtd{f}.\]
Since $U \cap L$ is closed in $L$, we can extend $\wtd{f}$ to a function $f$ on $L$. Taking $\beta := \wtd{\beta} + \d f$ proves the claim.

Choose now the sequence of submanifolds $N_m = N\times\{1/2\}\times\{1/m\} \subset U$, for $m>0$; notice that, by choice of $U$, $N_m\subset \calN$ as well. 
Notice that, as we are dealing with the case where $N_\infty$ is embedded in $L_\infty$, $N_\infty$ can be seen as the juncture of the spiraling of the end $e$ around $L_\infty$.
In particular, up to modifying the projection map $\pi$ of the spiraling of $e$ onto $L_\infty$ in such a way that it coincides with the projection onto the first two factors $N\times[0,1]$ on $U=N\times[0,1]^2$ (notice that this can be achieved up to shrinking $U$ and up to passing to a subsequence of the $N_m$'s; c.f.\ \Cref{def:Spiraling}), we can assume that all the lifts $N_m$ project down via $\pi$ onto the juncture $N_\infty$.
In particular, all the $N_m$'s are separating in the end $e$.
Denote then by $K_m$ the compact subset of $\calN=N\times[0,\infty)$ bounded $N_m$ and $N_1$.
Notice that $N_m$ also escapes at  infinity along the end $e$, so that the closure $P$ of the unbounded subset of $N\times[0,\infty)\setminus N_1$ is just given by the ``limit'' of $K_m$ for $m\to\infty$, i.e.\ by $\cup_{m\geq1} K_m$.

As $U$ is compact and $\omega$ is continuous, the $\omega$-volume of each of the plaques $N \times [0,1] \times \{z\}$ of $U$ is bounded from below by a strictly positive constant. 
Since $P$ intersects $U$ in infinitely many of these plaques and $K_m\subset K_{m+1}$, we have
\[ 
\lim_{m\to\infty} \int_{K_m} \omega^n = 
\int_{P}\omega^n = 
\infty.
\]
Using Stokes' theorem and the fact that $\alpha=\beta$ on $\calN\cap U$, we then get
\[ 
\lim_{m \to \infty} \int_{N_m} \alpha \wedge \omega^{n-1} = 
\lim_{m \to \infty} \int_{N_m} \beta \wedge \d \beta^{n-1} = 
\infty.
\]
On the other hand, because of the continuity of $\alpha,\omega$ on $U$ and the choice of $N_m=N\times\{1/2\}\times\{1/m\}\subset U$, 
we must have
\[ 
\lim_{m \to \infty} \int_{N_m} \alpha \wedge \omega^{n-1} = 
\lim_{m \to \infty} \int_{N \times \{1/2\}\times \{1/m\}} \alpha \wedge \omega^{n-1} = \int_{N_\infty} \alpha \wedge \omega^{n-1} \in \R.
\]
We have thus reached a contradiction, proving that $W$ cannot be diffeomorphic to a proper symplectic leaf as desired.

It remains to explain how to deal with the case where the projection of $N \times \{0\}$ under $\pi$ is \emph{not} embedded in $L_\infty$. In general $N \times \{0\}$ is contained in the union $B_i \cup \dots \cup B_{i + k}$ for some $i \in \N$ and $k > 1$ (the projection is embedded precisely when $k=1$), see \Cref{def:Spiraling} for notation. 
Let $p:L_{\infty,k} \to L_{\infty}$ be the $k$-cover obtained by cutting $L_\infty$ along a juncture, gluing $k$ copies of this cut manifold one after the other, and gluing the two remaining free boundary components to create a closed manifold $L_{\infty,k}$.
Next, extend $p$ to an immersion \[\wtd{p}:L_{\infty,k} \times (-\varepsilon,\varepsilon) \to \Op(L_\infty),\] for some $\varepsilon > 0$, and consider the pullback (symplectic) foliation. 

Each connected component of the preimage of the neighborhood $\mathcal{N}$ (which we may assume to be contained in the image of $\wtd{p}$ up to taking a smaller neighborhood of $e$ of the same form $N\times[0,\infty)$) is diffeomorphic to $N \times [0,\infty)$, and spirals onto $L_{\infty,k}$.
However, since the induced projection is injective on $B_{i} \cup \dots \cup B_{i+k}$, now $N \times \{0\}$ does project to an embedded submanifold of $L_{\infty,k}$. 
Then, the argument previously described shows that $N \times [0,\infty)$ cannot be (part of) a leaf in the pullback foliation on $L_{\infty,k} \times (-\varepsilon,\varepsilon)$, thus giving the desired contradiction.
\end{proof}

\begin{proof}[Proof of \Cref{prop:smooth_manifold_sympl_non_nonproper_leaf}]
As $L$ is non-proper, there is an end $e$ of $L$ such that any distinguished neighborhood $\mathcal{N} := N\times [0,\infty)$ of $e$ accumulates onto itself. 
Indeed, the limit set of a leaf is the union of the limit sets of its ends, and each of the latter is a saturated set; c.f.\ for instance \cite[Lemmas 4.3.5 and 4.3.7]{CanCon00}.

Then, using Reeb stability and $\pi_1(N) =0$, we find a foliated subset $U := N \times [0,1]^2$ of $M$ such that
\begin{equation}\label{eq:NonProperFoliatedChart}
 \F|_U = \bigcup_{z \in [0,1]} N \times [0,1] \times \{z\},\quad \mathcal{N} \cap U  \supset N \times [0,1] \times \{0\} \cup \bigcup_{m \in \N} N \times [0,1] \times \{s_m\},
 \end{equation}
for a strictly decreasing sequence $\{s_m\}_{m \in \N}$.
The submanifolds $N \times \{1/2\} \times \{s_m\}$ give a sequence of submanifolds $N_m\subset \mathcal{N}$
converging (in the ambient manifold) to $N_\infty\coloneqq N\times \{1/2\}\times \{0\} \subset U$.

Now, the same argument as in the proof of \Cref{prop:smooth_manifold_sympl_non_proper_leaf} shows that there exist differential forms $\alpha \in \Omega^1(\mathcal{F}\vert_U)$ and $\beta \in \Omega^1(\mathcal{N})$ satisfying:
\[ \omega|_{U} = \d \alpha,\quad \omega|_{\mathcal{N}} = \d \beta,\quad \alpha|_{\mathcal{N} \cap U} = \beta.\]

As $\alpha$ and $\omega$ are continuous over $N_\infty$, we have that
\begin{equation}
\label{eqn:finite_integral}
\lim_{m \to \infty} \int_{N_m} \alpha \wedge \omega^{n-1}
=\int_{N_\infty} \alpha \wedge \omega^{n-1}
\in \R.
\end{equation}

It turns out that all the $N_m$ represent the same (non-zero) cohomology class in $\mathcal{N}$. We now conclude the proof under this assumption; afterwards we will prove the claim.

The assumption implies that there exist compact subsets $K_m \subset \mathcal{N}$ such that 
\[ \partial K_m = N_m - N_0.\]
The plaques in Equation \ref{eq:NonProperFoliatedChart} have $\omega$-volume bounded from below by a strictly positive constant. Therefore, the $\omega$-volume of the $K_m$ goes to infinity. 
Then, Stokes' theorem implies
\[ \lim_{m \to \infty} \int_{N_m} \beta \wedge \omega^{n-1} - \int_{N_0} \beta\wedge \omega^{n-1} = \lim_{m \to \infty} \int_{K_m} \omega^{n}  = \infty,\]
which contradicts \Cref{eqn:finite_integral} as $\alpha=\beta$ over $U\cap \calN$, thus concluding the proof.

It remains to show that the previously made assumption is always met, i.e.\ that, up to passing to a subsequence, the $N_m$ all represent the same (non-trivial) homology class as $N_\infty = N\times\{1/2\}\times\{0\}$ in $\mathcal{N}$.
As each $N_m$ is connected and $H^{2n-1}(\mathcal{N}) = \ZZ$ generated by $N_\infty$, it follows (see for instance \cite{MeePat77}) that $[N_m]\in H^{2n-1}(\mathcal{N};\ZZ)$ is equal to either $[N_\infty]$ or $0$.

So, assume by contradiction that $[N_{m_0}]=0$ in $H^{2n-1}(\mathcal{N};\ZZ)$ for some $m_0\in \N$;
denote by $C_{m_0}$ the compact region (contained inside $\mathcal{N}$) which is bounded by $N_{m_0}$. We may assume that $N_{m_0}$ bounds $C_{m_0}$ ``from the left'', by which we mean
\[ C_{m_0} \cap U = N \times [1/2,1] \times \{s_{m_0}\},\]
i.e. $C_n$ sits on the right of $N_n$ inside $U$.

We consider then the set 
\[ \mathcal{S} := \left\{ s \in [0,1] \Big\vert \,\, \parbox{8cm}{$ N \times \{1/2\} \times \{s\} \subset U$ bounds from the left a leafwise submanifold diffeomorphic to $C_{m_0}$,}\right\}.\]
Notice that $\mathcal{S}$ is open and does not contain $0$. 
Indeed, since $N_{m_0}$ and $\mathcal{N}$ are simply connected and have trivial second cohomology the same holds for $C_{m_0}$.
Thus, if $s\in\mathcal{S}$, by Reeb stability there exists an open (foliated) product neighbourhood around the $C_s$ bounded by $N_s=N\times\{1/2\}\times\{s\}$ and diffeomorphic to $C_{m_0}$; hence, $C_s$ can be pushed-off to nearby leaves, proving that $\mathcal{S}$ is open. 
Furthermore, by the assumptions of \Cref{prop:smooth_manifold_sympl_non_nonproper_leaf}, $N_\infty$ does not bound any compact, simply-connected region without second cohomology, so that $0 \notin \mathcal{S}$.

Let $s_*$ be the infimum of the connected component of $\mathcal{S}$ containing $s_{m_0}$ and notice that, $s_*\notin \mathcal{S}$ and $s_*> 0$.

We now use the following theorem due to Schweitzer \cite{Sch11}:

\begin{thm}[{\cite[Proposition 7.1]{Sch11}}]
Let $(M,\F)$ be a foliated manifold and $C$ be a compact manifold with boundary $B$. If $(C\times (0,1] \cup B \times \{0\}) \to (M,\F)$ is a foliated embedding that cannot be extended over $C \times [0,1]$, then the leaf containing $h(B \times \{0\})$ is the boundary of a generalized Reeb component whose interior is the union of the leaves meeting $h(B \times (0,1])$.
\end{thm}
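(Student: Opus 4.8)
The plan is to analyze the one–parameter family of leafwise embeddings $h_t := h(\cdot,t)\colon C \to L_t$, where $L_t$ denotes the leaf containing $h(C\times\{t\})$, and to show that the failure to extend $h$ over $C\times\{0\}$ forces the holonomy of the leaf $L_0$ containing $B_0 := h(B\times\{0\})$ to be one-sidedly attracting along $B_0$ — which is exactly the defining dynamical feature of a generalized Reeb component. First I would extract the transverse collar structure: the restriction $h|_{B\times[0,1]}$ is itself a foliated embedding, now including the slice $t=0$, so it exhibits a continuous family of compact submanifolds $B_t\subset L_t$ converging monotonically (with respect to the coorientation of $\F$) to $B_0\subset L_0$ as $t\to 0^+$, with each $B_t = \partial C_t$ the boundary of the leafwise-embedded copy $C_t := h(C\times\{t\})$. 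Since $B$ and $C$ are compact, a flow-box argument (pushing the compact piece $C_t$ off to nearby leaves along a transverse flow, consistently with the given boundary collar) shows the only possible obstruction to extension concentrates at the limit $t\to 0$.

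Next I would set up the key dichotomy as $t\to 0^+$. Fix a transversal $\tau$ through a point $q\in L_0$ lying in the interior of the region that a would-be extension $C_0\subset L_0$ should cover. If the plaques $C_t$ stayed within a fixed compact saturated neighborhood and the maps $h_t$ converged uniformly, then — using compactness of $C$ together with the fact that the $h_t$ are embeddings whose boundary restrictions limit to the embedding $h|_{B\times\{0\}}$ — a subsequential limit would furnish a leafwise embedding of $C$ into $L_0$ extending $h|_{B\times\{0\}}$, contradicting non-extendability. Therefore the $C_t$ must escape every foliated chart: the intersection points $C_t\cap\tau$ accumulate at $\tau\cap L_0 = q$ while the corresponding plaques wind around $L_0$. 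Translating this into holonomy, the return map of the holonomy pseudogroup of $L_0$ along the loops swept out by this winding has $q$ as a one-sided attracting fixed point, with the leaves $L_t$ spiraling onto $L_0$ precisely from the coorientation side determined by the collar $h(B\times(0,1])$.

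Finally I would assemble the component. I set $R$ to be the closure of the union of all leaves meeting $h(B\times(0,1])$. The uniform control coming from the compactness of $C$, together with the one-sided attracting holonomy just established, shows that $R$ is a closed saturated set whose topological boundary is exactly the single leaf $L_0$, and that every leaf in $\operatorname{int} R$ accumulates only on $L_0$ and on no other leaf. These are precisely the defining properties of a generalized Reeb component with boundary leaf $L_0$ and interior $\bigcup\{L : L\cap h(B\times(0,1])\neq\emptyset\}$, as claimed.

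The main obstacle I anticipate is the second step: rigorously converting the purely topological hypothesis that $h$ does not extend over $C\times\{0\}$ into the dynamical conclusion that the holonomy of $L_0$ is one-sidedly attracting. One must rule out every competing cause of non-convergence of the $h_t$ — escape of interior points to a different leaf, or degeneration of the limit to a non-injective leafwise map while the boundary collar persists — and show that the genuine reason is spiraling. This is where the compactness of $C$ must be leveraged, both to produce a uniform limit on the boundary part and to guarantee a well-defined transverse return map. Globalizing the resulting local spiraling to conclude that $\partial R$ consists of $L_0$ alone, with no additional limit leaves, is the remaining technical heart of the argument.
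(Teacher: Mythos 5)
A preliminary remark: the paper does not prove this statement at all --- it is imported verbatim from Schweitzer \cite{Sch11} and used as a black box inside the proof of \Cref{prop:smooth_manifold_sympl_non_nonproper_leaf}. So there is no in-paper argument to compare yours against, and your sketch has to stand on its own. It does identify the right mechanism (non-extendability at $t=0$ should force one-sided spiraling of the $L_t$ onto $L_0$, hence a dead-end region), but the two decisive steps are asserted rather than proved. The central gap is your second step: the dichotomy ``either the $h_t$ converge uniformly to an embedding extending $h|_{B\times\{0\}}$, or the $C_t$ escape every foliated chart and exhibit attracting holonomy'' is not exhaustive. You do not exclude that the pointwise limits exist and assemble into a continuous foliated map $h_0\colon C\to L_0$ which fails to give an extension only because it is non-injective, or because $h_0(C)$ meets $h(C\times\{t\})$ for some $t>0$ (possible when $L_0$ coincides with one of the $L_t$), or that interior points of $C_t$ limit onto leaves other than $L_0$. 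You candidly flag exactly this as ``the remaining technical heart of the argument''; that is accurate, but it means the proof of the implication from non-extendability to spiraling --- which is essentially the whole content of the proposition --- is missing.

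The concluding step also rests on an incorrect characterization of generalized Reeb components. The property ``every leaf in $\operatorname{int}R$ accumulates only on $L_0$ and on no other leaf'' is neither the definition nor a consequence of the hypotheses: the leaves $L_t$ are controlled only near the image of $h$, and away from it they may accumulate on other interior leaves or on minimal sets contained in $R$, which is perfectly compatible with $R$ being a generalized Reeb component. What actually has to be verified is the dead-end condition: the saturation of $h(B\times(0,1])$ is an open saturated set whose frontier consists of boundary leaves along which the transverse orientation points inward, equivalently no positively directed transversal starting in the interior can cross the boundary. That is the statement into which the non-extendability and the compactness of $C$ must be converted, and it is also what controls the frontier of $R$; as written, your last paragraph proves a false statement from a wrong definition, so even granting step two the argument would not close.
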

The fact that a foliated embedding as in the hypothesis exists in our setting simply follows from a continuation argument using Reeb stability starting from $C_{m_0}$.
Then, the result readily implies that $L$ is either the boundary of a generalized Reeb component, in the case where $N_{s_*}$ is in $L$, or in the interior of a generalized Reeb component, in the case where  $N_{s_*}$ is in a leaf $L_*$ different from $L$.
In both cases, $L$ must be proper, thus giving the desired contradiction.
\end{proof}

We now turn our attention to \Cref{cor:R2n_non_proper_leaf} concerning $\RR^{2n}$ and its realizability as a symplectic leaf.
\begin{proof}[Proof of \Cref{cor:R2n_non_proper_leaf}]
The fact that $\RR^{2n}$ is not diffeomorphic to a proper symplectic leaf simply follows from \Cref{prop:smooth_manifold_sympl_non_proper_leaf} above.
It's then enough to realize $(\RR^{2n},\omega_{std})$ as a dense leaf in some symplectic foliation; this can be done as follows.

Start from a foliation of $\RR^{2n+1}$ by affine hyperplanes $\RR^{2n}$, pairwise parallel to each other and each directed by a $2n$-dimensional vector subspace of irrational and $\QQ$-linearly independent slopes. 
In other words, the directing vector subspaces are defined by $z = \sum_i(a_i x_i + b_i y_i)$, where we use coordinates $(x_i,y_i,z)\in\RR^{2n+1}$ and the $(a_i,b_i)\in\RR^{2n}$ are linearly independent over $\QQ$.
Now, one can pull back, to each of these affine subspaces, the standard symplectic form on $\RR^{2n}$ by the natural projection $\RR^{2n+1}\to\RR^{2n}$; this gives a leafwise symplectic form on this irrational foliation on $\RR^{2n+1}$.
Now, notice that both the foliation and the leafwise symplectic structure are invariant under the natural action of $\ZZ^{2n+1}$ by translation on each coordinate $(x_i,y_i,z)$ of $\RR^{2n+1}$. 
What's more, because of the $\QQ$-linear independence condition on the coefficients $a_i$ and $b_i$, the resulting leaves are just symplectomorphic to $(\RR^{2n},\omega_{std})$, and each of them is dense in $\TT^{2n+1}$, as desired.
\end{proof}

\section{Examples of symplectic non-leaves via blowups}
\label{sec:nonleaves_blowup}

The aim of the section is to prove \Cref{thm:blowup_non_leaves} and \Cref{cor:real_space_blown_up}.

\begin{proof}[Proof of \Cref{thm:blowup_non_leaves}]
Fix an exhaustion of contact type $\{K_m\}_{m \in \N}$ of $(W,\omega)$ (Definition \ref{def:ContactExhaustion}). 
Using \Cref{thm:strongly_geom_bounded}
we find a symplectic form $\omega'$ on $W$ which away from the $K_m$ equals a very large rescaling of $\omega$; 
these rescaling factors are in particular taken so big that there exists a sequence balls
\[ B_m \subset K_m \setminus K_{m-1},\]
such that $(B_m,\wtd{\omega}|_{B_m})$ is a standard symplectic ball of radius $m$. 

Performing a blowup of weight $m$ at $B_m$ for each of the balls (as defined in \cite[Theorem 7.1.21]{McDSalBook}) and subsequently applying Theorem \ref{thm:strongly_geom_bounded} then yields a \acs{sgb} symplectic manifold $(W',\omega')$. 
Notice that (by the explicit formula in \cite[Theorem 7.1.21]{McDSalBook}), the complex projective $(n-1)-$space $C_m$ resulting from the blow-up at the ball $B_m$ with weight $m$ has $\omega'-$volume $m^4\pi^{2n}$.
We claim $(W',\omega')$ is not symplectomorphic to a leaf of a symplectic foliation.

Firstly assume by contradiction it is symplectomorphic to a \emph{proper} symplectic leaf. 
Then, according to \Cref{prop:proper_depth_one} and \Cref{cor:finite_depth_sympl_approx_period} the end of $W'$ is symplectically almost periodic. 
As such, there exists a fundamental system of neighbourhoods $\{h^k(U)\}_{k \in \N}$ as in Definition \ref{def:sympl_periodic_end}. 
In particular, if $C$ is one of the blown-up complex projective spaces entirely contained in $U$, then for any $\varepsilon>0$ there is an $N>0$ such that the $\omega'$-symplectic volume of all the $h^n(C)$, for $n>N$, differ by at most $\varepsilon$.
However, because $H_{2n-2}$ of $W'$ is generated by the classes of the complex projective spaces coming from the blow-ups, their $\omega'-$volumes must differ by at least $1$ by choice of the blow-up weights, so we arrive at a contradiction.

Secondly, assume by contradiction that $(W',\omega')$ is symplectomorphic to a \emph{non-proper} leaf $L$ of a symplectic foliation. 
Fix a simply-connected neighborhood $\mathcal{C}\subset L$ of one of the $C_m$'s. 
By Reeb stability we find a foliated subset $\mathcal{C} \times (-1,1)$. 
As $L$ is non-proper, it intersects $\mathcal{C} \times (-1,1)$ in infinitely many plaques $\mathcal{C} \times \{pt\}$. 
By continuity of the leafwise symplectic form, this implies that, for any $\epsilon>0$, $L$ contains infinitely many copies of $\CP^{n-1}$ whose $\omega'$-volume differ by at most $\varepsilon$. 
Thus, as in the proper case, we arrive at a contradiction.
\end{proof}

\begin{proof}[Proof of \Cref{cor:real_space_blown_up}]
Let $W$ be the smooth manifold obtained by complex blow up of $\CC^n$ at infinitely many points, as in the statement. 
The fact that $W$ admits a symplectic form $\omega$ for which $(W,\omega)$ is not symplectomorphic to a symplectic leaf follows from \Cref{thm:blowup_non_leaves}.
We now want to realize as a symplectic leaf a symplectic blow-up $(W,\omega')$ of $\omega_{std}$ on $\CC^n$ for which the resulting complex spaces $C_j$'s are all of the same $\omega'$-volume.

For this, we start from the example of symplectic foliation $(\calF,\omega)$ on $\TT^{2n+1}$ by dense leaves, all symplectomorphic to $(\RR^{2n},\omega_{std})$, as in the proof of \Cref{cor:R2n_non_proper_leaf} in \Cref{sec:not_diffeo_sympl_leaves}.
Recall that such foliation is transverse to the second $\SSS^1$ factor of $\TT^{2n+1}=\TT^{2n}\times\SSS^1$, and that the projection onto the first factor is a local symplectomorphism onto $(\TT^{2n},\omega_{T})$, where $\omega_{T}$ is the symplectic structure on $\TT^{2n}=\RR^{2n}/\ZZ^{2n}$ induced from $(\RR^{2n},\omega_{std})$.

Now, consider any transverse curve $\gamma$, for instance $\gamma(z)=(x_i^0,y_i^0,z)$ for any choice of $(x_i^0,y_i^0)\in\TT^{2n}$.
As the restriction of the projection onto the first factor $\TT^{2n+1}=\TT^{2n}\times\SSS^1\to\TT^{2n}$ to each leaf is a local symplectomorphism, $\gamma$ admits, for $\delta>0$ sufficiently small, a symplectically foliated neighborhood of the form $(B^{2n}_\delta\times\SSS^1,\omega_{std}^{B})$, where $B^{2n}_\delta$ is the ball of radius $\delta$ in $\RR^{2n}$, $\calF$ is of the form $B_\delta \times \{\theta\}$ with $\theta\in\SSS^1$, and the leafwise symplectic form $\omega_{std}^{B}$ is just given by the restriction of $\omega_{std}$ on $\RR^{2n}$ to $B_\delta^{2n}$.
This allows to perform an $\SSS^1$-equivariant symplectic blow-up construction in this local model $(B^{2n}_\delta\times\SSS^1,\omega_{std}^{B})$ (as in \cite[Theorem 7.1.21]{McDSalBook}), in such a way that the origin of the $B^{2n}_\delta$ factor is replaced by a complex projective $(n-1)-$space of a certain symplectic volume $\epsilon$ (the same for every $\theta\in\SSS^1$). 
We denote the result of this blowup by $(X\times\SSS^1,\omega_{X})$.
This glues well to $(\TT^{2n+1},\calF,\omega) \setminus (B^{2n}_\delta,\omega_{std}^{B})$ in order to give a symplectically foliated manifold $(M,\G,\Omega)$.
Notice that, as  the leaves $\RR^{2n}$ of $(\TT^{2n+1},\calF)$ are all dense and $\gamma$ intersects each of them in infinitely many points, the leaves of $\G$ are also dense and are just smoothly obtained by (complex-)blowing up $\RR^{2n}$ at infinitely many points.
Lastly, the restriction of the symplectic form to each leaf is a symplectic form $\omega$ on $W$ such that all the $C_j$'s obtained from blowing up each point have same $\omega$-volume $\epsilon$.
This concludes the proof.
\end{proof}

\bibliographystyle{halpha}
\bibliography{my_bibliography}

\end{document}